\newtheorem{thm}{Theorem}
\newtheorem*{thm*}{Theorem}
\newtheorem{lem}[thm]{Lemma}
\theoremstyle{remark}
\newtheorem{rmk}[thm]{Remark}
\newcommand{\eps}{\varepsilon}
\def\@email#1#2{%
 \endgroup
 \patchcmd{\titleblock@produce}
  {\frontmatter@RRAPformat}
  {\frontmatter@RRAPformat{\produce@RRAP{*#1\href{mailto:#2}{#2}}}\frontmatter@RRAPformat}
  {}{}
}%
\begin{document}

\preprint{AIP/123-QED}

 \title{Controlling pulse stability in singularly perturbed reaction-diffusion systems}

 \author{F. Veerman}
 \email{f.w.j.veerman@math.leidenuniv.nl}
 \affiliation{Leiden University, Mathematisch Instituut, Niels Bohrweg 1, 2333 CA Leiden, The Netherlands}

 \author{I. Schneider}
  \email{isabelle.schneider@fu-berlin.de}
 \altaffiliation[Also at ]{Freie Universität Berlin, Institut für Mathematik, Arnimallee 7, 14195 Berlin, Germany}

\affiliation{ 
Universität Rostock, Institut für Mathematik, Ulmenstr. 69, 18057 Rostock, Germany
}%

 \date{\today}

\begin{abstract} 
\noindent 
The aim of this paper is to investigate the use of Pyragas control on the stability of stationary, localised coherent structures in a general class of two-component, singularly perturbed, reaction-diffusion systems.
We use noninvasive Pyragas-like proportional feedback control to stabilise a singular pulse solution to a two-component, singularly perturbed reaction-diffusion system. We show that in a significant region of parameter space, the control can be adjusted to stabilise an otherwise unstable pulse.  
\end{abstract}

\maketitle


\begin{quotation}
Singularly perturbed pulses in two-component reaction-diffusion equations are generally unstable in large regions of parameter space. To address this instability, we utilize Pyragas control, which was originally developed for periodic solutions of ordinary differential equations, to create a noninvasive feedback control for singularly perturbed pulses in reaction-diffusion systems. We prove the effectiveness of this control technique by analyzing the spectral stability of the controlled singular pulse in a toy model, using Evans function techniques. 

\end{quotation}

\section{Introduction}

 Reaction-diffusion systems are mathematical models based on semi-linear parabolic partial differential equations \cite{HEN81}. One of the most fascinating aspects of reaction-diffusion equations is the plethora of patterns that can emerge from their solutions. These patterns include such diverse phenomena spiral waves in a chemical oscillator \cite{WIN84}, the different animal coat patterns \cite{MUR88},  Faraday waves \cite{CHE99}, or various patterns in Rayleigh--Bénard convection \cite{BOD00}, geology\cite{SHE07}, or fluid dynamics \cite{CRA91}. 

These patterns, of which some can be classified as Turing patterns\cite{TUR52}, exhibit a variety of behaviors, including the formation of travelling waves and wave-like phenomena, as well as self-organized structures such as stripes, hexagons, and dissipative solitons \cite{HOY06}. The study of these patterns has significant implications for understanding natural phenomena and has led to new insights in various fields\cite{GOL03}.

The focus of this article is on the analysis and control of a specific spatially localized coherent structure in a two-component reaction-diffusion equation: a symmetric singular pulse \cite{VeermanDoelman.2013,DoelmanVeerman.2015,DGK01}. 
 An illustration of this pulse is presented in Figure \ref{fig:singular_pulse}. It is characterized by a noticeable scale separation, which is evident in the difference in pulse width between the two components.

It has been shown\cite{DGK01,DoelmanVeerman.2015} that such two-component pulses can only be stable when the nonlinear interaction between the two components is sufficiently strong. Even when this is the case, singular pulses are unstable for large regions in parameter space, see Ref \onlinecite[Lemmas 5.11, 5.12, and 5.14]{DoelmanVeerman.2015}.
As such, singular pulses are infrequently observed in nature or experiments. Often only the progression toward a stable steady state is observable, while unstable steady states remain largely imperceptible.

It is therefore our goal to introduce a Pyragas-like control term to make unstable pulses visible. Pyragas control \cite{PYR92,PYR06} is advantageous as it is  noninvasive on the pulses, i.e., the control term vanishes and does not change the pulse itself. However, it changes the nearby solutions and thereby the stability properties of the pulse. 
Another benefit of this control approach is its model-independence and low implementation cost, as it does not require expensive calculations.
Although originally designed for periodic solutions of ordinary differential equations, we adapt it for controlling singularly perturbed pulses in reaction-diffusion systems.

We aim to integrate control theory with the theory of pattern existence and stability in singularly perturbed reaction-diffusion systems. Specifically, we aim to control the stability of pulse solutions and develop a novel methodology to regulate the stability of diverse patterns in a broad class of singularly perturbed reaction-diffusion systems.


This paper is organized as follows: In Section \ref{modelpatterncontrol}, we discuss our model system and introduce the singular pulse as the pattern of interest; we also introduce noninvasive control terms.
Next, in Section \ref{evans}, we construct an Evans function to determine the spectral stability of the singular pulse.
Our main result is presented in Section \ref{stabilization}, where we demonstrate that noninvasive feedback stabilization can be achieved in a large region of parameter space.  We provide a brief summary of the stability proof in this section, with a complete and detailed version of the proof available in the appendix (section \ref{app:proof}).
We conclude with a short discussion in Section \ref{conclusion}.

\section{Model, patterns and control}\label{modelpatterncontrol}
\subsection{Model}
We consider the following general class of two-component, singularly perturbed, reaction-diffusion equations:
\begin{subequations}\label{eq:rdsystem_general}
  \begin{align}
     u_t &= u_{xx} - \mu u + F_1(u) + \frac{1}{\eps}F_2(u,v),\\
     v_t &= \eps^2 v_{xx} -v + G(u,v),
 \end{align}
\end{subequations}
with $x \in \mathbb{R}$, $\mu>0$ and $0<\eps \ll 1$ asymptotically small. The nonlinear functions $F_{1,2}$ and $G$ obey mild regularity assumptions \cite{DoelmanVeerman.2015}; most importantly, $F_2$ and $G$ converge superlinearly to $0$ as $V \to 0$.\\
All subsequent calculations will be carried out in the context of the following toy model:
\begin{subequations}\label{eq:rdsystem_explicit}
  \begin{align}
     u_t &= u_{xx} - u + \frac{1}{\eps}f(u)^2 T_o(u) \,\frac{v^2}{3},\\
     v_t &= \eps^2 v_{xx} - v + f(u) v^2,
 \end{align}
\end{subequations}
with $f$ and $T_0$ smooth functions of $u$.\\
System \eqref{eq:rdsystem_explicit} exhibits most defining qualities of the general model class \eqref{eq:rdsystem_general}, while allowing for explicit computations. We emphasise, however, that the methods and techniques used in this paper apply to all systems of the general class \eqref{eq:rdsystem_general}.


\subsection{Patterns}
The spatially localised coherent structure to be studied in this paper is a symmetric singular pulse, bi-asymptotic to the trivial background state; see Figure \ref{fig:singular_pulse}. The singularly perturbed nature of \eqref{eq:rdsystem_general}, through the asymptotically small diffusion term $\eps^2 v_{xx}$, induces a spatial scale separation in stationary pattern solutions to \eqref{eq:rdsystem_general}. For singular pulses, this scale separation is visible in the difference in pulse width between the $u$- and $v$-component, see again Figure \ref{fig:singular_pulse}; to this end, we introduce the short-scale spatial variable
\begin{equation}
\xi = \frac{x}{\eps}.
\end{equation}
The presence of the asymptotically small parameter $\eps$ allows for the application of Geometric Singular Perturbation Theory (GSPT) to rigorously establish the existence --by a constructive proof-- of stationary, symmetric singular pulse solutions \cite{DoelmanVeerman.2015}. Hence, we state the existence of a stationary, symmetric pulse solution $\left(u_p(x),v_p(\xi)\right)$ to \eqref{eq:rdsystem_general}, with $\left(u_p(x),v_p(\xi)\right) \sim \left(e^{-|x|}, e^{-|\xi|}\right)$ as $x \to \pm \infty$, provided an algebraic condition in terms of $F_{1,2}$ and $G$ is satisfied\cite{DoelmanVeerman.2015}. For the toy model \eqref{eq:rdsystem_explicit}, this algebraic condition is
\begin{equation}\label{eq:pulse_existence_condition}
    \mu u^2 = T_o(u)^2.
\end{equation}
In addition,
\begin{equation}\label{eq:pulse_toymodel}
    \left(u_p(x),v_p(x/\eps)\right) = \left(u_* e^{-|x|}, \frac{3}{2 f(u_*)} \text{sech}^2\frac{x}{2\eps}\right) + \mathcal{O}(\eps),
\end{equation}
where $u_*>0$ is a nondegenerate solution to \eqref{eq:pulse_existence_condition}, for which we assume that $f(u_*) \neq 0$. For more details on the pulse construction and proof of existence, see Ref \onlinecite{DoelmanVeerman.2015}.

\begin{figure}{t}
    \begin{overpic}[width=0.4\textwidth]{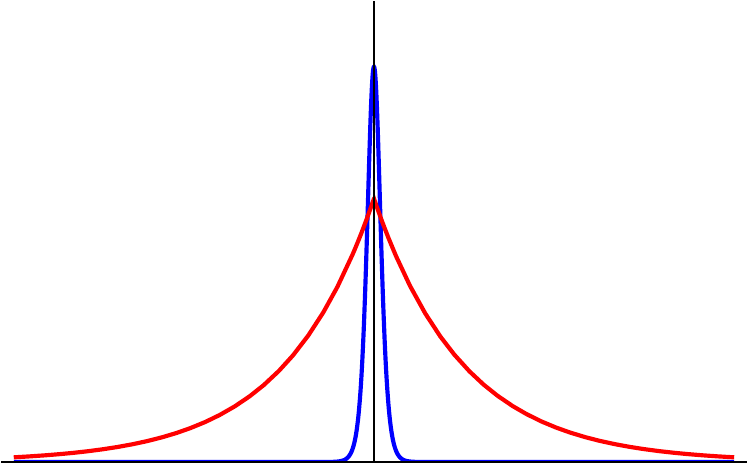}
	\put(70,10){\large{\textcolor{red}{$u_p(x)$}}}
    \put(55,35){\large{\textcolor{blue}{$v_p(x)$}}}
    \put(49,-4){$0$}
    \put(102,0){$x$}
	  \end{overpic}
    \caption{A typical profile of a singular symmetric pulse, bi-asymptotic to the trivial background state. A spatial scale separation between the large scale component $u$ (red) and the small scale component $v$ (blue) is clearly visible.}
    \label{fig:singular_pulse}
\end{figure}


\subsection{Control}
The aim of this paper is to investigate the use of Pyragas control \cite{PYR92,PYR06,SOC94} on the stability of stationary, localised coherent structures in \eqref{eq:rdsystem_general}. To that end, we introduce feedback control terms $K$ and $L$ to \eqref{eq:rdsystem_general}, yielding
\begin{subequations}\label{eq:rdsystem_general_control}
  \begin{align}
     u_t &= u_{xx} - \mu u + F_1(u) + \frac{1}{\eps}F_2(u,v) + K[u,v],\\
     v_t &= \eps^2 v_{xx} -v + G(u,v) + L[u,v],
 \end{align}
\end{subequations}
 where $K[u_p(x),v_p(\xi)] = 0 = L[u_p(x),v_p(\xi)]$. As the feedback control terms vanish on the target pattern $\left(u_p,v_p\right)$, this type of control is called \emph{non-invasive}. For control of patterns in PDEs, a wide range of possible control terms can be applied, including spatio-temporal delay, proportional feedback, or combinations of these \cite{LU96, MON04,Schneider.2017,SCH22}
 In this paper, we investigate one specific form of non-invasive control, applied to the toy model \eqref{eq:rdsystem_explicit}. We leave the $u$-equation intact, and add proportional feedback control to the $v$-equation, yielding
\begin{subequations}\label{eq:rdsystem_explicit_control}
  \begin{align}
     u_t &= u_{xx} - u + \frac{1}{\eps}f(u)^2 T_o(u) \,\frac{v^2}{3},\\
     v_t &= \eps^2 v_{xx} - v + f(u) v^2 + \ell\left(v-v_p(x)\right),
 \end{align}
\end{subequations}
 with $\ell:\mathbb{R} \to \mathbb{R}$ continuously differentiable at zero, and $\ell(0)=0$. Note that the latter condition ensures that $\left(u_p,v_p\right)$ \eqref{eq:pulse_toymodel} is a solution to \eqref{eq:rdsystem_explicit_control}. We have opted for control solely on the variable $v$ since it exhibits a high degree of spatial localization, with the $v$-component of the pulse essentially being zero beyond the spike region. This offers advantages from an application standpoint, as implementing localized controls is often simpler compared to extended controls covering a larger area.
 
 Note that, consequently, on the linear level, the control is not a multiple of the identity matrix. Therefore, the influence of the control term $\ell$ is not as straightforward as shifting all eigenvalues to the left.
 Our goal is to derive conditions on the control function $\ell$ such that the singular pulse $\left(u_p,v_p\right)$ \eqref{eq:pulse_toymodel} is a stable solution to \eqref{eq:rdsystem_explicit_control}.
 



\section{Pulse stability and the Evans function}\label{evans}
In order to be observable, a stationary pulse solution needs to be \emph{stable} as a solution to the PDE system \eqref{eq:rdsystem_general}. The (spectral) stability of a singular pulse solution to \eqref{eq:rdsystem_general} can be determined through constructing an Evans function $\mathcal{E}(\lambda)$, an analytic function whose roots precisely coincide with the (discrete) spectrum of the linear operator obtained by linearising \eqref{eq:rdsystem_general} at the singular pulse $(u_p,v_p)$ \cite{Sandstede.2002,KP.2013}. That is, we consider the eigenvalue problem
\begin{equation}\label{eq:evproblem_nocontrol}
    \mathcal{L} \begin{pmatrix} u\\ v \end{pmatrix} = \lambda \begin{pmatrix} u\\ v \end{pmatrix},
\end{equation}
with
\begin{equation}\label{eq:operator_L}
  \mathcal{L} = \begin{pmatrix} \partial_x^2 - 1 + F_1' + \frac{1}{\eps} \frac{\partial F_2}{\partial u} & \frac{1}{\eps} \frac{\partial F_2}{\partial v} \\ \frac{\partial G}{\partial u} & \eps^2 \partial_x^2 - 1 + \frac{\partial G}{\partial v}
  \end{pmatrix},
\end{equation}
where all (partial) derivatives of $F_{1,2}$ and $G$ are evaluated at $(u,v) = (u_p(x),v_p(\xi))$.\\
The essential spectrum of $\mathcal{L}$ \eqref{eq:operator_L} is real, negative, and bounded away from the imaginary axis\cite{DoelmanVeerman.2015}; hence, the pulse stability is determined by its discrete spectrum, i.e. the roots of the associated Evans function $\mathcal{E}(\lambda)$. In Ref \onlinecite{DoelmanVeerman.2015}, it is shown that the singularly perturbed structure of the pulse can be used to obtain an explicit characterisation of the roots of $\mathcal{E}(\lambda)$ to leading order in $\eps$; moreover, these roots perturb regularly in $\eps$.\\ For sake of brevity, we omit further details, and only state the main outcome of the theory developed in Ref \onlinecite{DoelmanVeerman.2015} when applied to the toy model \eqref{eq:rdsystem_explicit}: the spectrum of the pulse \eqref{eq:pulse_toymodel} is to leading order in $\eps$ determined by the roots of the function
\begin{eqnarray}\label{eq:ts_nocontrol}
    t_s(\lambda) := :=&& \frac{T_o'(u_*)}{T_o(u_*)} - \frac{1}{u_*}\sqrt{1+\lambda} \\
        &&+ \frac{f'(u_*)}{f(u_*)}\left(2  + \frac{1}{3} \int_{-\infty}^\infty \!\!\!\!\!\hat{v}_p(\xi) \hat{v}_\text{in}(\xi;\lambda)\,\text{d}\xi\right),\nonumber
\end{eqnarray}        
where $\hat{v}_p(\xi) = f(u_*) v_p(\xi)$ and $\hat{v}_\text{in}$ is the unique bounded solution to 
\begin{equation}\label{eq:vinhat_eq}
 \left[\partial_\xi^2 - 1-\lambda + 2 \hat{v}_p(\xi)\right]\hat{v}_\text{in} = - \hat{v}_p(\xi)^2,
\end{equation}
which does not depend on $u_*$.\\

A typical configuration of the spectrum of $\mathcal{L}$ \eqref{eq:operator_L} is shown in Figure \ref{fig:spectrum_sketch}. The pulse can lose stability when a pair of eigenvalues crosses the imaginary axis (a Hopf bifurcation) or when a real eigenvalue passes through the origin. Note that the pulse spectrum consists of both discrete (point) spectrum and essential (continuous) spectrum, as the spatial domain is unbounded.

\begin{figure}{t}
    \begin{overpic}[width=0.4\textwidth]{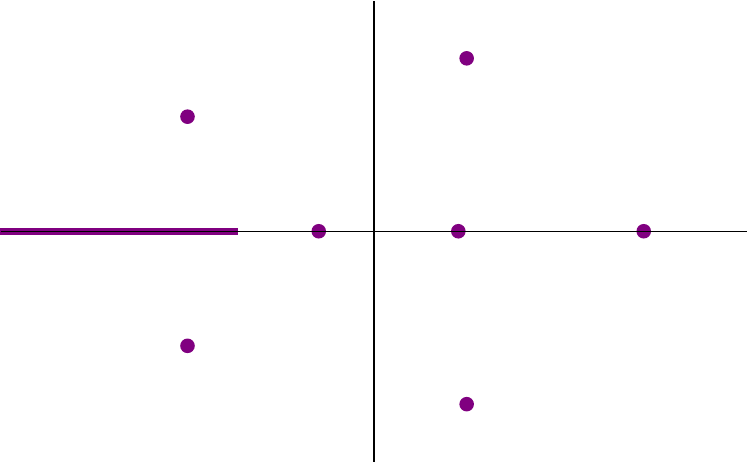}
	\put(95,55){\large{$\mathbb{C}$}}
    \put(102,30){Re $\lambda$}
    \put(45,63){Im $\lambda$}
    \end{overpic}
    \caption{A typical configuration of the spectrum of the pulse (purple) in the complex plane. Spectrum to the left of the imaginary axis is stable. The essential spectrum is seen to be real, negative, and bounded away from the imaginary axis.}
    \label{fig:spectrum_sketch}
\end{figure}



\section{Pulse stabilisation through proportional feedback control}\label{stabilization}
The main research question that we address in this paper is: Given a singular pulse solution $(u_p,v_p)$ to \eqref{eq:rdsystem_general}, can we find control terms $K$, $L$ such that this singular pulse is a stable solution to \eqref{eq:rdsystem_general_control}?\\
As the introduction of control terms has a (potentially) significant influence on the stability analysis of the singular pulse, we try to answer the research question formulated above in the context of the toy problem \eqref{eq:rdsystem_explicit}, with proportional feedback control in the $v$-equation \eqref{eq:rdsystem_explicit_control}. We first present the main outcome of our analysis in Theorem \ref{thm:caseIa}, and describe the main ideas of the proof. The full proof of Theorem \ref{thm:caseIa} can be found in Appendix \ref{app:proof}.

\begin{thm}\label{thm:caseIa}
 Let $0<\eps\ll 1$ be sufficiently small, and assume that $u_*$ is a nondegenerate solution to \eqref{eq:pulse_existence_condition}. Consider the symmetric singular pulse solution $(u_p,v_p)$ to \eqref{eq:rdsystem_explicit}, which is to leading order in $\eps$ given by \eqref{eq:pulse_toymodel}, and introduce $\rho := 2 \frac{f'(u_*)}{f(u_*)} + \frac{T_o'(u_*)}{T_o(u_*)}$.
 \begin{enumerate}
  \item If $f'(u_*) = 0$, then the singular pulse $(u_p,v_p)$ is always unstable for any choice of proportional control function $\ell(v-v_p)$ as implemented in \eqref{eq:rdsystem_explicit_control}.
  \item If $f'(u_*)<0$, then it is possible to choose a proportional control function $\ell(v-v_p)$, as implemented in \eqref{eq:rdsystem_explicit_control}, such that the singular pulse $(u_p,v_p)$ is stable.
  \item Let $f'(u_*)>0$. 
  \begin{enumerate}
  \item If $\rho > \frac{1}{u_*}$, then the singular pulse $(u_p,v_p)$ is always unstable for any choice of proportional control function $\ell(v-v_p)$ as implemented in \eqref{eq:rdsystem_explicit_control}. 
  \item If $\rho < \frac{1}{u_*}$, then it is possible to choose a proportional control function $\ell(v-v_p)$, as implemented in \eqref{eq:rdsystem_explicit_control}, such that the pulse solution $(u_p,v_p)$ is stable.
  \end{enumerate}
 \end{enumerate}
\end{thm}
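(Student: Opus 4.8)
The plan is to reduce, just as in Section~\ref{evans}, to the zero set of a single explicit function of the spectral parameter, and then to track how the proportional feedback moves that zero set.

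\emph{Step 1: the controlled stability function.} Linearising \eqref{eq:rdsystem_explicit_control} about $(u_p,v_p)$ and using $\ell(0)=0$ with $\ell$ differentiable at $0$, the control contributes exactly the bounded, $x$-independent term $\gamma\,\delta v$ to the $v$-equation, $\gamma:=\ell'(0)\in\mathbb{R}$, while the $u$-equation (hence the whole slow field) is untouched. So the controlled eigenvalue problem is \eqref{eq:evproblem_nocontrol}--\eqref{eq:operator_L} with $\lambda$ replaced by $\lambda-\gamma$ in the $v$-row only, and re-running the slow/fast Evans construction of Ref~\onlinecite{DoelmanVeerman.2015} — the slow-field ingredients ($\sqrt{1+\lambda}$ and the jump coefficients, notably $\nu$) unchanged, the fast-field object $\hat v_\text{in}$ of \eqref{eq:vinhat_eq} evaluated at $\lambda-\gamma$ — shows that the leading-order spectrum of the controlled pulse is given by the zeros of
\begin{equation*}
 t_s^\gamma(\lambda) := \frac{T_o'(u_*)}{T_o(u_*)} - \frac{1}{u_*}\sqrt{1+\lambda} + \frac{f'(u_*)}{f(u_*)}\!\left(2 + \frac13\!\int_{-\infty}^\infty\!\!\hat v_p(\xi)\,\hat v_\text{in}(\xi;\lambda-\gamma)\,\mathrm{d}\xi\right).
\end{equation*}
I would also note that the essential spectrum of the controlled operator is $(-\infty,\max\{-1,\gamma-1\}]$ (so it stays strictly to the left of $i\mathbb{R}$ for $\gamma<1$); that the control breaks translation invariance, displacing the neutral eigenvalue of the uncontrolled pulse into the open left half-plane precisely for $\gamma<0$; and that, since $\partial_\xi^2-1+2\hat v_p$ has the eigenvalues $\tfrac54$ and $-\tfrac34$, the map $\mu\mapsto\hat v_\text{in}(\cdot;\mu)$ — hence $t_s^\gamma$ — has simple poles at $\lambda=\gamma+\tfrac54$ and $\lambda=\gamma-\tfrac34$, with residues of computable sign. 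As $\gamma\to-\infty$ the bracket tends to $2$ uniformly on compacta of the right half-plane (and stays bounded at infinity), so $t_s^\gamma\to\nu-\tfrac1{u_*}\sqrt{1+\lambda}$, which has a zero in $\{\Re\lambda\ge0\}$ iff $\nu\ge\tfrac1{u_*}$.

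\emph{Step 2: the case analysis.} Everything is now a question about the zeros of $t_s^\gamma$ in the closed right half-plane as $\gamma$ varies over $(-\infty,0)$. When $f'(u_*)=0$ the $\gamma$-dependent bracket is multiplied by $0$, so $t_s^\gamma\equiv t_s$ for every $\gamma$: the unstable part of the spectrum cannot be moved, and the pulse stays unstable by Ref~\onlinecite[Lemmas~5.11, 5.12, and 5.14]{DoelmanVeerman.2015} — item~1. When $f'(u_*)\ne0$, one first observes that $\gamma<-\tfrac54$ is forced: otherwise the pole at $\lambda=\gamma+\tfrac54$ sits in $\{\Re\lambda\ge0\}$ and, by the sign of its residue, produces a zero of $t_s^\gamma$ there. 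For $\gamma<-\tfrac54$, $t_s^\gamma$ is pole-free on $\{\Re\lambda\ge0\}$, and one splits on the sign of $\nu-\tfrac1{u_*}$ and of $f'(u_*)$. If $f'(u_*)>0$ and $\nu\ge\tfrac1{u_*}$ (item~3a), the favourable sign of the bracket keeps $t_s^\gamma(0)>0$ while $t_s^\gamma\to-\infty$ along $\mathbb{R}_+$, for \emph{every} admissible $\gamma$, so a positive real zero always persists: no control stabilises. If instead $\nu<\tfrac1{u_*}$ (item~3b) — taking $\gamma$ sufficiently negative, using the $\gamma\to-\infty$ limit above — or $f'(u_*)<0$ (item~2, where the bracket carries the opposite, helpful sign and a suitable $\gamma<-\tfrac54$ can be tuned to drag $t_s^\gamma$ below $0$ throughout $\mathbb{R}_+$), an argument-principle count of $t_s^\gamma$ along the boundary of a large half-disc $\{\Re\lambda\ge0,\ |\lambda|\le R\}$ shows $t_s^\gamma$ has no zeros in the closed right half-plane; together with the essential-spectrum bound of Step~1 and the $\mathcal{O}(\eps)$-regularity of the Evans zeros, the controlled pulse is spectrally stable for $0<\eps\ll1$.

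The hard part, in both directions, is Step~2's global control of the zero set: verifying for the stabilising choices of $\gamma$ that $t_s^\gamma$ has \emph{no} zeros anywhere in $\{\Re\lambda\ge0\}$ — in particular no purely imaginary ones, i.e.\ no incipient Hopf bifurcation — and, in the ``always unstable'' cases, that a right-half-plane zero genuinely cannot be removed by any $\gamma$. I expect this to rest on explicit monotonicity and sign estimates for $\mu\mapsto\tfrac13\int_{-\infty}^\infty\hat v_p\,\hat v_\text{in}(\xi;\mu)\,\mathrm{d}\xi$ and for its residues at $\mu=\tfrac54,-\tfrac34$, combined with a winding-number argument uniform in $\gamma$ and $\eps$; controlling $t_s^\gamma$ along the imaginary axis for all admissible $\gamma$ is, I anticipate, the principal obstacle.
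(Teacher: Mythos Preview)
Your Step~1 is exactly the paper's reduction: the control shifts $\lambda\mapsto\hat\lambda=\lambda-\ell'(0)$ in the fast equation only, and the resulting $t_s^\gamma$ is the paper's $\hat t_s$ \eqref{eq:ts_control}. The essential-spectrum and translation-eigenvalue remarks are correct and used.

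Step~2, however, has real gaps.

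\textbf{Item~1 is not proved by your argument.} When $f'(u_*)=0$ you observe $t_s^\gamma\equiv t_s$ and invoke the cited instability lemmas. But those lemmas cover only \emph{regions} of parameter space; and the limiting function $\nu-\tfrac1{u_*}\sqrt{1+\lambda}$ (here $\nu=T_o'(u_*)/T_o(u_*)$) need not have a right-half-plane zero at all. The actual mechanism --- which your sketch never invokes --- is that the Evans function factors as $\mathcal E\sim t_f\cdot t_s$ \eqref{eq:Evansfunction_decomp}, where the \emph{fast} transmission function $t_f$ has a simple zero at $\hat\lambda_0=\tfrac54$. Generically this is cancelled by the pole of $t_s$ you identified; when $f'(u_*)=0$ the inhomogeneous problem \eqref{eq:vinhat_eq} is trivial, $t_s$ loses the pole, and the uncancelled $t_f$-zero persists in $\mathcal E$. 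You track only the zeros of $t_s^\gamma$, so you cannot see this.

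\textbf{Your ``principal obstacle'' is resolved by a tool you do not use.} The paper does not run a winding-number argument. Instead it applies Weyl--Titchmarsh--Kodaira theory to write $R(\hat\lambda):=\langle\hat v_{\mathrm{in}}(\cdot;\hat\lambda),\hat v_p\rangle_2$ explicitly as two simple-pole terms plus a Stieltjes-type integral over $\sigma_{\mathrm{ess}}(\mathcal L_f)$. From this one reads off directly that $\mathrm{sgn}\,\mathrm{Im}\,R(\hat\lambda)=-\,\mathrm{sgn}\,\mathrm{Im}\,\hat\lambda$. Since $\mathrm{sgn}\,\mathrm{Im}\,\sqrt z=+\,\mathrm{sgn}\,\mathrm{Im}\,z$, every solution of \eqref{eq:rdstability_Ia_control_mainequation_alphabeta} is \emph{real} whenever $f'(u_*)>0$, which disposes of the imaginary-axis (Hopf) issue in one line. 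The same representation gives $\mathrm{Re}\,R(\hat\lambda)>0$ for $\mathrm{Re}\,\hat\lambda>c$ with $c$ independent of $\ell'(0)$; this is what drives item~2 ($f'(u_*)<0$), where complex zeros \emph{are} possible and your ``drag $t_s^\gamma$ below $0$ throughout $\mathbb R_+$'' addresses only the real ones.

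\textbf{Item~3a must hold for every $\gamma<1$, not only $\gamma<-\tfrac54$.} Your residue claim for $\gamma\ge-\tfrac54$ is asserted, not proved. Once reality of solutions is known (previous paragraph), the paper instead uses monotonicity of $R$ on $(\hat\lambda_0,\infty)$ and of $L(\hat\lambda)=\alpha+\beta\sqrt{1+\hat\lambda+\ell'(0)}$ to show the largest intersection $\hat\lambda_*$ satisfies $\hat\lambda_*>(\alpha/\beta)^2-1-\ell'(0)\ge-\ell'(0)$ exactly when $-\alpha\ge\beta>0$, i.e.\ $\nu\ge\tfrac1{u_*}$, for \emph{every} $\ell'(0)$.
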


A visual representation of the statement of Theorem \ref{thm:caseIa} is given in Figure \ref{fig:caseIa}. A direct application of Theorem \ref{thm:caseIa} for specific parameter values is shown in Figures \ref{fig:spectrumcontrol} and \ref{fig:spectrumcontrol2}.\\

It is worthwhile to note that control on one variable only suffices to control both components of the pulse.

Specific conditions that the control function $\ell$ needs to satisfy to stabilise the pulse can be found in the proof of Lemma \ref{lem:alphabeta_solutions}, Appendix \ref{app:proof}. In particular, the essential spectrum is stable if and only if $\ell'(0) < 1$ \eqref{eq:essspec_fulloperator}.

Moreover, note that the controllability of the pulse strongly depends on the quantity $\rho = 2(f'/f) + (T_0'/T_0)$, which corresponds to the logarithmic derivative of $f^2(u)T_0(u)$. This quantity represents the $u$-dependent nonlinearity of \eqref{eq:rdsystem_explicit}(a) at $u=u_*$.

\begin{rmk}
While the purpose of this paper is to show the stabilisation of pulse that is unstable in the absence of control, it is worthwhile to note that our control scheme can also destabilise an otherwise stable pulse. For example, choosing $\ell'(0) < 1$ destabilises the pulse through a sideband instability, as the essential spectrum is pushed through the imaginary axis \eqref{eq:essspec_fulloperator}. We do not explore such destabilisation scenarios in the current paper; the desirability of pulse destabilisation through noninvasive control depends on the model context and application.   
\end{rmk}

The proof of Theorem \ref{thm:caseIa}  starts with the observation that the linear stability of $(u_p,v_p)$ as a solution to \eqref{eq:rdsystem_explicit_control} can be written as
\begin{equation}\label{eq:evproblem_control}
    \left[\mathcal{L}  - \begin{pmatrix} 0 & 0 \\ 0 & \ell'(0)\end{pmatrix}\right]\begin{pmatrix} u\\ v \end{pmatrix} = \lambda \begin{pmatrix} u\\ v \end{pmatrix} ,
\end{equation}
with $\mathcal{L}$ given in \eqref{eq:operator_L}. This means that the procedure to construct an Evans function, as presented in Ref \onlinecite{DoelmanVeerman.2015}, can be applied to \eqref{eq:evproblem_control}. The singularly perturbed structure of the pulse again leads to the result that the solutions to the eigenvalue problem \eqref{eq:evproblem_control} are, to leading order in $\eps$, determined by the roots of the function
\begin{eqnarray}\label{eq:ts_control}
        \hat{t}_{s}(\lambda) :=&& \frac{T_o'(u_*)}{T_o(u_*)} - \frac{1}{u_*}\sqrt{1+\lambda} \\
        &&+ \frac{f'(u_*)}{f(u_*)}\left(2  + \frac{1}{3} \int_{-\infty}^\infty \!\!\!\!\!\hat{v}_p(\xi) \hat{v}_\text{in}(\xi;\lambda - \ell'(0))\,\text{d}\xi\right)\nonumber
\end{eqnarray}
The remainder of the proof is a detailed analysis of the meromorphic complex function $\hat{t}_s$. The main difficulty lies in understanding the $\lambda$-dependence of the integral term in \eqref{eq:ts_control}, which is closely related the $\lambda$-dependence of $\hat{v}_\text{in}(\lambda-\ell'(0))$, where $\hat{v}_\text{in}$ is the unique bounded solution to \eqref{eq:vinhat_eq}.\\
Next, we introduce $\hat{\lambda} = \lambda - \ell'(0)$, and use Weyl--Titchmarsh--Kodaira spectral theory\cite{Titchmarsh.1962,HigsonTan.2020} to express the integral
\begin{displaymath}
 \int_{-\infty}^\infty \!\!\!\!\!\hat{v}_p(\xi) \hat{v}_\text{in}(\xi;\hat{\lambda})\,\text{d}\xi = \left\langle \hat{v}_\text{in},\hat{v}_p\right\rangle_2
\end{displaymath}
in terms of projections onto eigenfunctions of the operator $\mathcal{L}_f := \partial_\xi^2 - 1-\hat{\lambda} + 2 \hat{v}_p(\xi)$, cf. \eqref{eq:vinhat_eq}. This allows us to obtain estimates on $\left\langle \hat{v}_\text{in},\hat{v}_p\right\rangle_2$, from which the statements of the theorem follow.

\begin{figure}[h!]
 \begin{overpic}[width=0.4\textwidth]{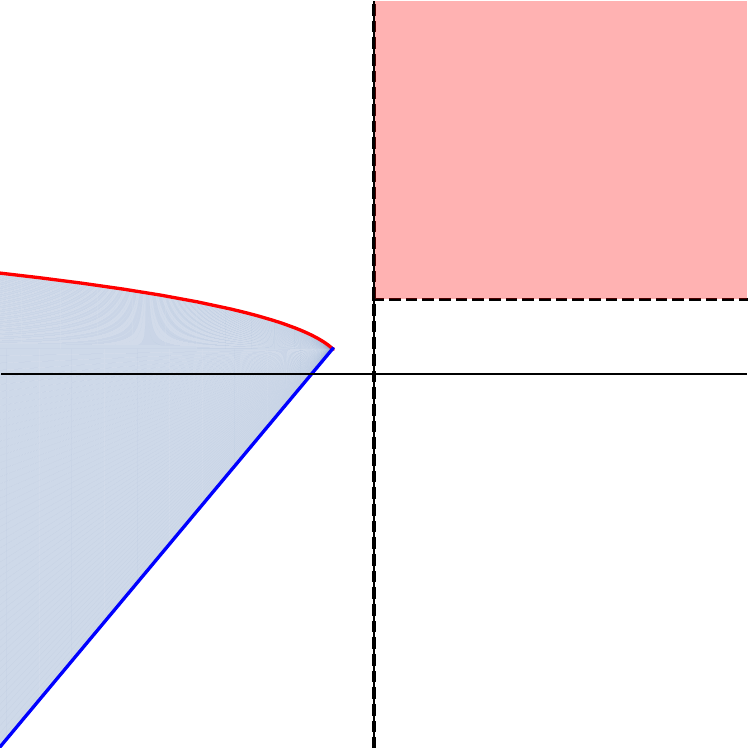}
	\put(5,40){stable}
	\put(70,80){unstable}
	\put(15,75){controllable}
	\put(65,25){controllable}
	\put(102,49){$f'(u_*)$}
	\put(48,102){$\rho$}
	\put(44,58){$\dfrac{1}{u_*}$}
	\put(46,46){$0$}
 \end{overpic}
\caption{The stability of the pulse $(u_p,v_p)$ as determined in Theorem \ref{thm:caseIa}, with $\rho = 2\frac{f'(u_*)}{f(u_*)} + \frac{T_o'(u_*)}{T_o(u_*)}$. In the blue region (numerically determined), the pulse is stable without control; stability is lost either through a Hopf bifurcation (red curve) or by a real eigenvalue passing through zero (blue curve). In the red region and on the dashed lines, the pulse is unstable and cannot be controlled, see Theorem \ref{thm:caseIa} 1) and 3a). In the remainder of the parameter space, the pulse is unstable but can be stabilised by proportional control in the $v$-component, as implemented in \eqref{eq:rdsystem_explicit_control}}\label{fig:caseIa}
\end{figure}

\begin{figure}[h!]
 \begin{overpic}[width=0.4\textwidth]{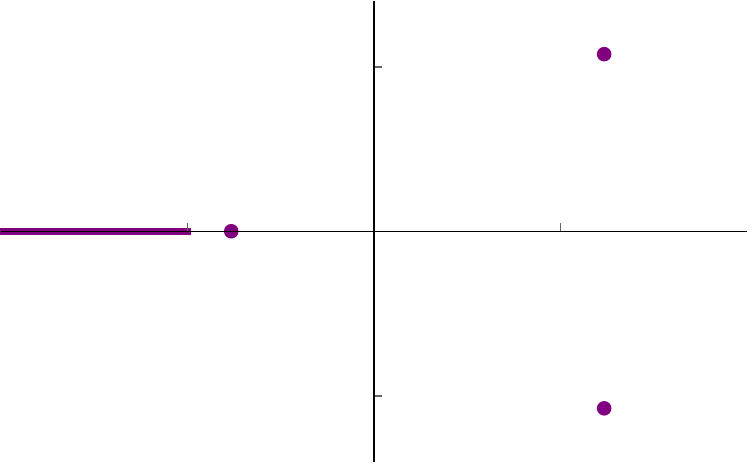}
	\put(0,60){\textbf{(A)}}
    \put(5,50){without control}
    \put(21,26){$-1$}
    \put(74,26){$1$}
    \put(46,52){$5$}
    \put(43,8){$-5$}
 \end{overpic}
 
 \vspace*{2\baselineskip}
 
 \begin{overpic}[width=0.4\textwidth]{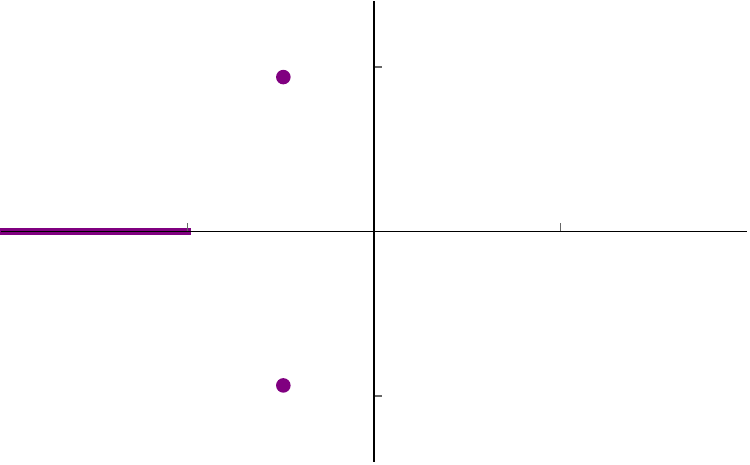}
	\put(0,60){\textbf{(B)}}
 \put(5,50){with control}
     \put(21,26){$-1$}
    \put(74,26){$1$}
    \put(46,52){$5$}
    \put(43,8){$-5$}
 \end{overpic}

  \vspace*{2\baselineskip}
 
 \begin{overpic}[width=0.4\textwidth]{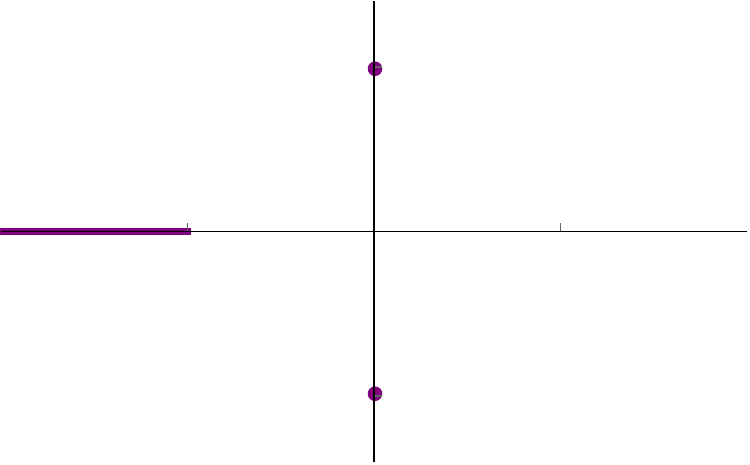}
	\put(0,60){\textbf{(C)}}
 \put(5,50){Hopf bifurcation}
     \put(21,26){$-1$}
    \put(74,26){$1$}
    \put(46,52){$5$}
    \put(43,8){$-5$}
 \end{overpic}
\caption{ \textbf{(A)} The spectrum for $(u_p,v_p)$\eqref{eq:pulse_toymodel} for the toy model \eqref{eq:rdsystem_explicit}, with parameter choices $\rho := 2 \frac{f'(u_*)}{f(u_*)} + \frac{T_o'(u_*)}{T_o(u_*)} = \frac{2}{u_*}$ and $f'(u_*) = -3\frac{f(u_*)}{u_*}$. For these parameters, the pulse is unstable. \textbf{(B)} Application of proportional feedback control as in \eqref{eq:rdsystem_explicit_control}, with $\ell'(0) = -3$ is (more than) sufficient to stabilise the pulse; this is a direct consequence of Theorem \ref{thm:caseIa}, statement 2. \textbf{(C)} Proportional feedback control as in in \eqref{eq:rdsystem_explicit_control}, with $\ell'(0) = -2.1882 + \mathcal{O}(\eps)$ stabilises the pulse through a Hopf bifurcation. The spectral configurations shown in this Figure are numerically obtained roots of the Evans function \eqref{eq:ts_control}.} \label{fig:spectrumcontrol}
\end{figure}

\begin{figure}[h!]
 
  \begin{overpic}[width=0.4\textwidth]{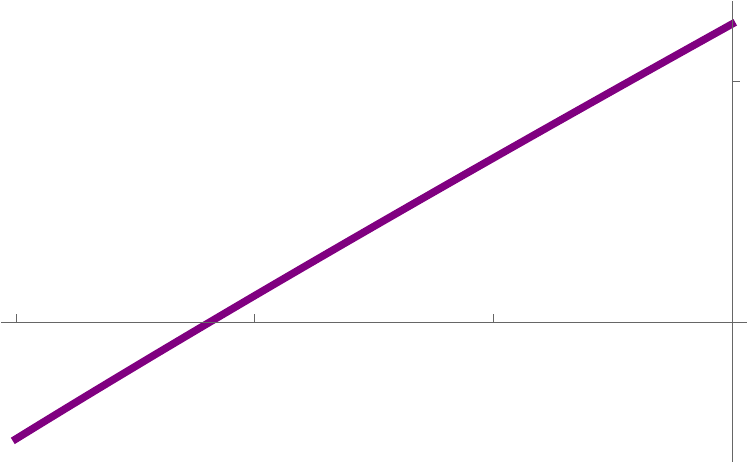}
	  \put(-2,15){$-3$}
    \put(30,15){$-2$}
    \put(62,15){$-1$}
    \put(94,49){$5$}
    \put(101,18){$\ell'(0)$}
    \put(94,64){Re $\lambda$}
 \end{overpic}

\caption{The real part of the complex eigenvalue pair shown in Figure \ref{fig:spectrumcontrol} as a function of the control strength $\ell'(0)$, with proportional feedback control as in \eqref{eq:rdsystem_explicit_control}. A sufficiently strong control will stabilise the pulse through a Hopf bifurcation.} \label{fig:spectrumcontrol2}
\end{figure}

\section{Conclusion and outlook}\label{conclusion}

In conclusion, we have demonstrated the possibility of stabilizing singularly perturbed pulses in two-component reaction-diffusion equations across significant areas of the parameter plane. Specifically, for the toy model \eqref{eq:rdsystem_explicit}, we have identified three regions in the $(\rho,f'(u_*))$-plane: a stable region where control is unnecessary, an unstable region where control is always insufficient, and -- our main interest here -- a large region where pulse stability can be controlled through our proposed control term.

Several potential avenues for future investigation emerge. First, it may be worthwhile to explore the use of a ``true'' Pyragas control, in which the control is not proportional but includes a delay term, and as such does not rely on explicit pre-existing knowledge of the pulse structure. This poses additional problems in the rigorous stability analysis as difficulties may arise concerning nonlinear stability. 
For the control scheme investigated in this paper, it was not necessary to differentiate between spectral and nonlinear stability. In the non-controlled eigenvalue problem \eqref{eq:evproblem_nocontrol}, spectral stability implies nonlinear stability; this follows from a classical result by Henry\cite{HEN81} because the operator $\mathcal{L}$ \eqref{eq:operator_L} is sectorial. The linear operator associated to the 'controlled' eigenvalue problem \eqref{eq:evproblem_control} has the same (sectorial) property. Hence, one can directly infer nonlinear stability from spectral stability. For other control strategies, such as delayed feedback, the sectorial property is generally not retained. In such case, one needs to be more careful about deducing nonlinear stability; see e.g. Ref \onlinecite{Schnaubelt.2004} for generalisations of Henry's result.

Second, investigating the impact of control on the large scale component $u$ in comparison to the current focus on the small scale component $v$ would provide valuable insights for this two-component system. Additionally, exploring the potential of non-diagonal controls would also be of interest.

Third, our current analysis has focused on relatively simple spatially-localized coherent structures. It would be interesting to investigate the potential extension of our control scheme to more complex patterns, such as multi-circuit configurations\cite{DGK01} and/or periodic pulse patterns\cite{deRijk_etal.2016}, and to determine the conditions under which stabilization can be achieved for these more intricate situations. 







\begin{acknowledgments}F.V. has been supported by a Humboldt Fellowship.
I.S. has been supported by the Deutsche Forschungsgemeinschaft, SFB 910, Project A4 “Spatio-Temporal Patterns: Control, Delays, and Design”. 
We would like to thank all the members of SFB 910, in particular Sabine Klapp, Eckehard Schöll and Bernold Fiedler, for their valuable contributions and continuous support. In addition, we would like to express our gratitude to Nigel Higson for insightful discussions on Weyl's Theorem.
\end{acknowledgments}

\section*{Data Availability Statement}
Data sharing is not applicable to this article as no new data were created or analyzed in this study.

\appendix

\section{Proof of Theorem \ref{thm:caseIa}}\label{app:proof}


\subsubsection{Pulse stability and the Evans function}\label{sss:Ia_Evans} 
For the stability analysis of the pulse $(u_p,v_p)$, we follow the approach of Ref \onlinecite{DoelmanVeerman.2015}. The eigenvalue problem determining the linear stability of $(u_p, v_p)$ is given by
\begin{subequations}\label{eq:rdstability_Ia_control}
  \begin{align}
     \lambda u =& u_{xx} - u  + \frac{1}{3\eps}v_p^2\left[f(u_p)^2   T'_o(u_p)+2 f(u_p)f'(u_p)T_o(u_p)\right]u\nonumber\\
     &+ \frac{2}{3\eps}f(u_p)^2 v_p  T_o(u_p) v ,\\
     \lambda v=& \eps^2 v_{xx} - v +2 f(u_p) v_p v+ f'(u_p) v_p^2 u+\ell'(0)v.
 \end{align}
\end{subequations}
In the short scale spatial coordinate $\xi=x/\eps$, system \eqref{eq:rdstability_Ia_control} can be reformulated as
\begin{subequations}\label{eq:rdstability_Ia_control_xi}
  \begin{align}
     \eps^2(\lambda + 1)u =& u_{\xi \xi}  + \frac{\eps}{3}v_p^2\left[f(u_p)^2   T'_o(u_p)+2 f(u_p)f'(u_p)T_o(u_p)\right]u\nonumber\\&+ \frac{2 \eps}{3}f(u_p)^2 v_p  T_o(u_p) v,\\
     \lambda v=&  v_{\xi \xi} - v +2 f(u_p) v_p v+ f'(u_p) v_p^2 u+\ell'(0)v.
 \end{align}
\end{subequations}
We can write \eqref{eq:rdstability_Ia_control_xi} as a first order system for $\phi = (u,u_\xi,v,v_\xi)$, 
\begin{equation}\label{eq:rdstability_Ia_A-system}
 \phi_\xi = \mathcal{A}(\xi;\lambda,\eps)\, \phi.
\end{equation}
This formulation allows us to use the theory presented in Ref \onlinecite{Sandstede.2002}. First, we observe that the matrix $\mathcal{A}$ is asymptotically constant since $(u_p(\xi),v_p(\xi)) \to (0,0)$ as $\xi \to \pm \infty$. The associated asymptotic matrix $\mathcal{A}_\infty(\lambda,\eps) := \lim_{\xi \to \pm \infty} \mathcal{A}(\xi;\lambda,\eps)$ has eigenvalues $\left\{\pm \eps\sqrt{1+\lambda},\pm \sqrt{1+\lambda-\ell'(0)}\right\}$. Hence, $\mathcal{A}_\infty$ is hyperbolic if and only if $\lambda > -1 + \text{min}(\ell'(0),0)$. Since $\partial_\xi - \mathcal{A}$ is a relatively compact perturbation of the operator $\partial_\xi - \mathcal{A}_\infty$, the essential spectrum of $\partial_\xi - \mathcal{A}$ is given by
\begin{equation}\label{eq:essspec_fulloperator}
 \sigma_\text{ess} = \left\{\lambda \in \mathbb{R}\;:\;\lambda \leq -1+ \text{max}(\ell'(0),0)\right\}.
\end{equation}
We observe that the essential spectrum is stable if and only if $\ell'(0) < 1$. Hence, from this point onward, we assume
\begin{equation}\label{eq:rdstability_Ia_ass_l4}
 \ell'(0) < 1,
\end{equation}
as a necessary condition for the spectrum of $\partial_\xi - \mathcal{A}$ to be stable.
The remainder of the spectrum is discrete; note that elements of this point spectrum can (and generically will) have nonzero imaginary part.\\

The Evans function $\mathcal{E}(\lambda;\eps)$ can now be defined for all $\lambda \notin \sigma_\text{ess}$, as follows. Let $N(\lambda,\eps) \subset \mathbb{R}^4$ be the set of initial conditions such that the associated solutions to \eqref{eq:rdstability_Ia_A-system} decay exponentially as $\xi \to \infty$; analogously, let $R(\lambda,\eps) \subset \mathbb{R}^4$ be the set of initial conditions such that the associated solutions to \eqref{eq:rdstability_Ia_A-system} decay exponentially as $\xi \to -\infty$. Because $\mathcal{A}_\infty$ is hyperbolic for $\lambda \notin \sigma_\text{ess}$, system \eqref{eq:rdstability_Ia_A-system} has an exponential dichotomy, and the subsets $N$ and $R$ are subspaces of $\mathbb{R}^4$ of dimension 2. Choosing an ordered basis $\left\{\phi_{1,N},\phi_{2,N}\right\}$ of $N$ and an ordered basis $\left\{\phi_{1,R},\phi_{2,R}\right\}$ of $R$, we define the Evans function as
\begin{equation}
\mathcal{E}(\lambda;\eps) = \text{det} \left(\phi_{1,N},\phi_{2,N},\phi_{1,R},\phi_{2,R}\right). 
\end{equation}
The Evans function is analytic on its domain. Most importantly, $\mathcal{E}(\lambda)$ is zero if and only if $\lambda$ is an eigenvalue of \eqref{eq:rdstability_Ia_A-system}; moreover, the algebraic multiplicity of this eigenvalue is equal to the order of $\lambda$ as a zero of $\mathcal{E}$ (Ref \onlinecite[Theorem 4.1]{Sandstede.2002}).

\subsubsection{An explicit expression for the Evans function}
We can use the theory developed in Ref \onlinecite{DoelmanVeerman.2015} to determine an explicit expression for the Evans function $\mathcal{E}$. In particular, we observe that the approach and resulting expressions from Ref \onlinecite[section 4]{DoelmanVeerman.2015} can be applied directly for the shifted eigenvalue $\hat{\lambda} = \lambda - \ell'(0)$. Note that, although the eigenvalue system \eqref{eq:rdstability_Ia_control_xi} is \emph{superficially} analogous to the equivalent system (Ref \onlinecite[(3.2)]{DoelmanVeerman.2015}) with parameter $\mu = 1+\ell'(0)$, the underlying pulse solution itself depends on $\mu$ as well through the existence condition (Ref \onlinecite[(2.16)]{DoelmanVeerman.2015}). Therefore, we cannot a priori apply the instability results from Ref \onlinecite[section 5]{DoelmanVeerman.2015}. Moreover, as the stability of the controlled pulse is determined by the real part of $\lambda$, the stability condition for the shifted eigenvalue
\begin{equation}\label{eq:def_lambdahat}
\hat{\lambda}:= \lambda - \ell'(0)
\end{equation}
is given by 
\begin{equation}\label{eq:stabcond_lambdahat}
\text{Re }\hat{\lambda} < -\ell'(0).
\end{equation}
In terms of $\hat{\lambda}$, the essential spectrum is given by
\begin{equation}\label{eq:rdstability_Ia_control_essspec_hat}
 \hat{\sigma}_\text{ess} = \left\{\hat{\lambda} \in \mathbb{R}\;:\;\hat{\lambda} \leq -1+ \text{max}(0,-\ell'(0))\right\}.
\end{equation}
From the analysis in Ref \onlinecite{DoelmanVeerman.2015}, it follows that for all $\hat{\lambda} \notin \hat{\sigma}_\text{ess}$, the Evans function $\mathcal{E}$ can be written as
\begin{equation}\label{eq:Evansfunction_decomp}
 \mathcal{E}(\hat{\lambda},\eps) = 4 \eps\, t_f(\hat{\lambda},\eps)\, t_s(\hat{\lambda},\eps)\sqrt{1+\hat{\lambda}}\sqrt{1+\ell'(0) + \hat{\lambda}},
\end{equation}
in terms of the so-called transmission functions $t_f(\hat{\lambda},\eps)$ and $t_s(\hat{\lambda},\eps)$. These transmission functions are not necessarily analytic, but merely meromorphic on $\mathbb{C} \setminus \hat{\sigma}_\text{ess}$. The structure of these transmission functions can be determined in more detail, as explained in Ref \onlinecite{DoelmanVeerman.2015}. For future reference, we summarise the most important aspects of the analysis in Ref \onlinecite{DoelmanVeerman.2015} below.\\

Consider the Sturm-Liouville operator
\begin{equation}\label{eq:Lf}
 \mathcal{L}_f = \partial_\xi^2 - 1 + 2 f(u_*) v_p = \partial_\xi^2 - 1 + 3\, \text{sech}^2 \left(\frac{\xi}{2}\right),
\end{equation}
cf. \eqref{eq:pulse_toymodel}. Its spectrum is given by
\begin{equation}\label{eq:Lf_spectrum}
 \hat{\sigma}_f = \left\{\hat{\lambda}_0,\hat{\lambda}_1,\hat{\lambda}_2\right\} \cup \left(-\infty,-1\right),
\end{equation}
with $\hat{\lambda}_0 = \frac{5}{4}$, $\hat{\lambda}_1 = 0$ and $\hat{\lambda}_2 = -\frac{3}{4}$, see Ref \onlinecite{VeermanDoelman.2013}. The zeroes of the `fast' transmission function $t_f(\hat{\lambda},\eps)$ are to leading order in $\eps$ given by the discrete spectrum of $\mathcal{L}_f$; in addition, these zeroes are simple. Moreover, the `slow' transmission function $t_s(\hat{\lambda},\eps)$ has a pole of order 1 exactly where $t_f(\hat{\lambda},\eps)$ has a zero, except for the zero associated to $\hat{\lambda}_1 = 0$ -- that is, $t_s$ has a pole for every discrete eigenvalue associated to an even eigenfunction. Hence, the relevant information about the zeroes of $\mathcal{E}$ \eqref{eq:Evansfunction_decomp}, i.e. the point spectrum of $\partial_\xi - \mathcal{A}$ (in $\hat{\lambda}$), is entirely decoded in $t_s$. Note that, although the concept of zero-pole cancellation plays a central role in controllability and the analysis of transfer functions, the occurrence of this phenomenon in the decomposition of the Evans function is completely unrelated to the method of Pyragas control that we employ in this paper; it is solely related to the slow-fast structure of system \eqref{eq:rdstability_Ia_control_xi} and the underlying pulse solution $(u_p,v_p)$.

The `slow' transmission function $t_s$ is, to leading order in $\eps$, given by the following expression according to Ref \onlinecite[Theorem 4.4]{DoelmanVeerman.2015}:
\begin{eqnarray}
 t_s(\hat{\lambda}) &=& C(\hat{\lambda})\Big\{-2 \sqrt{1+\hat{\lambda}+\ell'(0)} \\ &&\quad+ \int_{-\infty}^\infty \frac{\partial}{\partial u}\left[\frac{1}{3} f(u)^2 v_p(\xi)^2 T_o(u)\right] \nonumber\\&&\qquad\quad+ \frac{2}{3} v_\text{in}(\xi;\hat{\lambda}) v_p(\xi) f(u)^2 T_o(u)\,\text{d}\xi\Big\},\nonumber
\end{eqnarray}
evaluated at $u = u_p(0) = u_* = T_o(u_*)$ by \eqref{eq:pulse_existence_condition}. Here, $C(\hat{\lambda})$ can be explicitly computed but is uniformly bounded away from zero, and $v_\text{in}(\hat{\lambda})$ is the unique (bounded) solution to the inhomogeneous Sturm-Liouville problem
\begin{equation}\label{eq:vin_SL-eq}
 \left(\mathcal{L}_f -\hat{\lambda}\right)v_\text{in} = - f'(u_*) v_p(\xi)^2.
\end{equation}
Using the explicit leading order expressions \eqref{eq:pulse_toymodel} for the pulse solution $(u_p,v_p)$, we can further isolate the $u_*$-dependence and rewrite $t_s$ as
\begin{eqnarray}\label{eq:ts}
 t_s(\hat{\lambda}) &=& C(\hat{\lambda})\Bigg\{-2 \sqrt{1+\hat{\lambda}+\ell'(0)} + 2 u_* \bigg[\frac{T_o'(u_*)}{T_o(u_*)} + 2 \frac{f'(u_*)}{f(u_*)} \nonumber\\ &&
 \qquad+ \frac{1}{3}\frac{f'(u_*)}{f(u_*)} \int_{-\infty}^\infty \!\!\!\!\!\hat{v}_p(\xi) \hat{v}_\text{in}(\xi;\hat{\lambda})\,\text{d}\xi\bigg]\Bigg\},
\end{eqnarray}
where $\hat{v}_p(\xi) = f(u_*) v_p(\xi)$ and $\hat{v}_\text{in}$ is the unique bounded solution to 
\begin{equation}\label{eq:app:vinhat_eq}
 \left(\mathcal{L}_f - \hat{\lambda}\right)\hat{v}_\text{in} = - \hat{v}_p(\xi)^2,
\end{equation}
which does not depend on $u_*$. The zeroes of $t_s$ are therefore the (complex) solutions to the equation
\begin{eqnarray}\label{eq:rdstability_Ia_mainequation}
-\frac{T_o'(u_*)}{T_o(u_*)} + \frac{1}{u_*}\sqrt{1+\hat{\lambda}+\ell'(0)} = \\ \frac{f'(u_*)}{f(u_*)}\left(2  + \frac{1}{3} \int_{-\infty}^\infty \!\!\!\!\!\hat{v}_p(\xi) \hat{v}_\text{in}(\xi;\hat{\lambda})\,\text{d}\xi\right).\nonumber
\end{eqnarray}
Note that if $f'(u_*) = 0$, then $v_\text{in}$ \eqref{eq:vin_SL-eq} is trivial; hence no zero-pole cancellation in the Evans function \eqref{eq:Evansfunction_decomp} takes place, and therefore $\lambda_0>0$ is a zero of $\mathcal{E}$, rendering the pulse unstable and not controllable. Hence, assuming the nondegenerate situation $f'(u_*) \neq 0$, we rewrite \eqref{eq:rdstability_Ia_mainequation} as
\begin{equation}\label{eq:rdstability_Ia_control_mainequation_alphabeta}
 \alpha + \beta\sqrt{1+\hat{\lambda}+\ell'(0)} = \int_{-\infty}^\infty \!\!\!\!\!\hat{v}_p(\xi) \hat{v}_\text{in}(\xi;\hat{\lambda})\,\text{d}\xi,
\end{equation}
with $\alpha = -6-3\frac{T_o'(u_*)}{T_o(u_*)}\frac{f(u_*)}{f'(u_*)}$ and $\beta = 3\frac{f(u_*)}{f'(u_*)}\frac{1}{u_*}$; note that $\beta$ can a priori take any value except zero, and $\alpha$ can take any value in $\mathbb{R}$, depending on the properties of the model functions $f$ and $T_o$. The stability control question can now be phrased as follows:

\emph{Given $\alpha, \beta \in \mathbb{R}$, $\beta \neq 0$ fixed, can we choose the control parameter $\ell'(0) < 1$ such, that the (complex) $\hat{\lambda}$-solutions to \eqref{eq:rdstability_Ia_control_mainequation_alphabeta} outside the essential spectrum $\hat{\sigma}_\textrm{\emph{ess}}$ \eqref{eq:rdstability_Ia_control_essspec_hat} all lie to the left of the line $\text{Re }\hat{\lambda} = - \ell'(0)$?}

\subsubsection{Spectral decomposition in the presence of essential spectrum}

Although \eqref{eq:app:vinhat_eq} can in principle be solved by variation of parameters / Green's function methods, the result in terms of integrals over special functions does not easily yield the required insight into the functional behaviour of the right hand side of \eqref{eq:rdstability_Ia_control_mainequation_alphabeta}. Therefore, we use Weyl--Titchmarsh--Kodaira spectral theory to express the integral
\begin{displaymath}
 \int_{-\infty}^\infty \!\!\!\!\!\hat{v}_p(\xi) \hat{v}_\text{in}(\xi;\hat{\lambda})\,\text{d}\xi = \left\langle \hat{v}_\text{in},\hat{v}_p\right\rangle_2
\end{displaymath}
in terms of projections onto eigenfunctions of the operator $\mathcal{L}_f$ \eqref{eq:Lf}. We follow the treatment of Ref \onlinecite[\S 2.18, \S 4.19]{Titchmarsh.1962}. We refer to Ref \onlinecite{HigsonTan.2020} for a modern and highly accessible overview of Weyl--Titchmarsh--Kodaira spectral theory.\\

First, we define $\theta(\xi;\hat{\lambda})$ and $\phi(\xi;\hat{\lambda})$ such that $\left\{\theta,\phi\right\}$ spans the kernel of $\mathcal{L}_f - \hat{\lambda}$, and $\theta(0) = 1$, $\partial_\xi \theta(0) = 0$, $\phi(0) = 0$ and $\partial_\xi \phi(0) = -1$. Since $\mathcal{L}_f$ is symmetric under the reflection $\xi \to -\xi$, $\theta$ is even and $\phi$ is odd as a function of $\xi$. Both $\theta$ and $\phi$ can be expressed as a linear combination of associated Legendre functions $P_3^{-2\sqrt{1+\hat{\lambda}}}(\zeta)$ and $Q_3^{-2\sqrt{1+\hat{\lambda}}}(\zeta)$, with $\zeta = \text{tanh } \frac{\xi}{2}$, yielding
\begin{align*}
 \theta(\xi;\hat{\lambda}) &= \frac{\Gamma\left(4+2\sqrt{1+\hat{\lambda}}\right)}{16\, \hat{\lambda}\sqrt{1+\hat{\lambda}}} \bigg[P_3^{-2\sqrt{1+\hat{\lambda}}}\left(\text{tanh } \tfrac{\xi}{2}\right) \nonumber\\ &\qquad+ P_3^{-2\sqrt{1+\hat{\lambda}}}\left(-\text{tanh } \tfrac{\xi}{2}\right)\bigg],\\
 \phi(\xi;\hat{\lambda}) &= \frac{\Gamma\left(4+2\sqrt{1+\hat{\lambda}}\right)}{16 \left(\hat{\lambda}-\frac{5}{4}\right)\left(\hat{\lambda}+\frac{3}{4}\right)} \bigg[P_3^{-2\sqrt{1+\hat{\lambda}}}\left(\text{tanh } \tfrac{\xi}{2}\right) \nonumber\\&\qquad- P_3^{-2\sqrt{1+\hat{\lambda}}}\left(-\text{tanh } \tfrac{\xi}{2}\right)\bigg].
\end{align*}
Next, for $\hat{\lambda} \in (-\infty,-1)$, we determine $m_2(\hat{\lambda})$ such that the linear combination $\theta(\xi;\hat{\lambda}) + m_2(\hat{\lambda}) \phi(\xi;\hat{\lambda}) \in L^2(0,\infty)$; we find
\begin{equation}
 m_2(\hat{\lambda}) = -\frac{\left(\hat{\lambda}-\frac{5}{4}\right)\left(\hat{\lambda}+\frac{3}{4}\right)}{\hat{\lambda}\sqrt{1+\hat{\lambda}}} = i\frac{\left(\hat{\lambda}-\frac{5}{4}\right)\left(\hat{\lambda}+\frac{3}{4}\right)}{\hat{\lambda}\sqrt{-1-\hat{\lambda}}}.
\end{equation}
Following Ref \onlinecite[\S 4.19]{Titchmarsh.1962}, we conclude that for $f \in L^2(\mathbb{R})$ with $f(\xi) = f(-\xi)$, we have the expansion
\begin{eqnarray}\label{eq:f_Weylexpansion}
 f(\xi) &=& \sum_{i=0}^2 \left\langle \psi_i,f\right\rangle_2 \psi_i(\xi) \\ &&+ \int\displaylimits_{-\infty}^{-1} \frac{\hat{\lambda}\sqrt{-1-\hat{\lambda}}}{2\left(\hat{\lambda}-\frac{5}{4}\right)\left(\hat{\lambda}+\frac{3}{4}\right)} \left\langle \theta(\cdot,\hat{\lambda}),f\right\rangle_2\,\theta(\xi,\hat{\lambda})\;\text{d}\hat{\lambda} \nonumber
\end{eqnarray}
where the ($L^2$-normalised) eigenfunctions $\psi_i(\xi)$ associated to the eigenvalues $\hat{\lambda}_i$ are given by
\begin{align*}
 \psi_0(\xi) &= \frac{1}{4} \sqrt{\frac{15}{2}} \text{sech}^3 \frac{\xi}{2} = \frac{1}{4} \sqrt{\frac{15}{2}}\left(1-\zeta^2\right)^{3/2},\\
 \psi_1(\xi) &= \frac{1}{2} \sqrt{\frac{15}{2}} \text{sech}^2 \frac{\xi}{2}\, \text{tanh }\frac{\xi}{2} = \frac{1}{2} \sqrt{\frac{15}{2}} \zeta\left(1-\zeta^2\right),\\
 \psi_2(\xi) &= \frac{1}{4} \sqrt{\frac{3}{2}} \left(-3+2 \,\text{cosh }\xi\right) \text{sech}^3 \frac{\xi}{2} \\ &= \frac{1}{4}\sqrt{\frac{3}{2}}\left(-1+5\zeta^2\right)\sqrt{1-\zeta^2}.
\end{align*}
Note that $\left\langle f,\psi_1\right\rangle_2 = 0$ since $f(\xi)$ is assumed to be even and $\psi_1(\xi) = -\sqrt{\frac{5}{6}} \partial_\xi \hat{v}_p$ is odd. We apply the expansion \eqref{eq:f_Weylexpansion} to $\hat{v}_p(\xi)$, and subsequently consider the inner product 
\begin{align}
 \left\langle \hat{v}_\text{in}(\cdot,\hat{\lambda}),\hat{v}_p\right\rangle_2 = \sum_{i=0}^2 \left\langle \psi_i,\hat{v}_p\right\rangle_2 \,\left\langle\hat{v}_\text{in}(\cdot,\hat{\lambda}),\psi_i\right\rangle_2 \qquad\qquad\\
 + \int\displaylimits_{-\infty}^{-1}\!\!\! \frac{\hat{\mu}\sqrt{-1-\hat{\mu}}}{2\left(\hat{\mu}-\frac{5}{4}\right)\left(\hat{\mu}+\frac{3}{4}\right)} \left\langle \theta(\cdot,\hat{\mu}),\hat{v}_p\right\rangle_2\left\langle\hat{v}_\text{in}(\cdot,\hat{\lambda}),\theta(\cdot,\hat{\mu})\right\rangle_2\text{d}\hat{\mu},\nonumber
\end{align}
cf. Ref \onlinecite[Theorem 1.8]{HigsonTan.2020}. Now, because
\begin{align*}
 \left\langle \hat{v}_p^2,\psi_i\right\rangle_2 &=& \left\langle -\left[\mathcal{L}_f-\hat{\lambda}\right]\hat{v}_\text{in},\psi_i\right\rangle_2 = \left\langle \hat{v}_\text{in},-\left[\mathcal{L}_f-\hat{\lambda}^*\right]\psi_i\right\rangle\\
 &&= \left\langle\hat{v}_\text{in},-(\hat{\lambda}_i - \hat{\lambda}^*)\psi_i\right\rangle = \left(\hat{\lambda}-\hat{\lambda}_i\right)\left\langle\hat{v}_\text{in},\psi_i\right\rangle
\end{align*}
and equivalently
\begin{displaymath}
 \left\langle \hat{v}_p^2,\theta(\cdot,\hat{\mu})\right\rangle = \left(\hat{\lambda}-\hat{\mu}\right)\left\langle \hat{v}_\text{in}(\cdot,\hat{\lambda}),\theta(\cdot,\hat{\mu})\right\rangle,
\end{displaymath}
we find
\begin{eqnarray}
 \left\langle \hat{v}_\text{in}(\cdot,\hat{\lambda}),\hat{v}_p \right\rangle_2 = \sum_{i=0}^2 \frac{\left\langle \psi_i,\hat{v}_p\right\rangle_2\,\left\langle\hat{v}_p^2,\psi_i\right\rangle_2}{\hat{\lambda}-\hat{\lambda}_i}  \qquad \qquad\\ + \int\displaylimits_{-\infty}^{-1}\!\!\! \frac{\hat{\mu}\sqrt{-1-\hat{\mu}}}{2\left(\hat{\mu}-\frac{5}{4}\right)\left(\hat{\mu}+\frac{3}{4}\right)} \frac{\left\langle \theta(\cdot,\hat{\mu}),\hat{v}_p\right\rangle_2\left\langle\hat{v}_p^2,\theta(\cdot,\hat{\mu})\right\rangle_2}{\hat{\lambda} - \hat{\mu}}\text{d}\hat{\mu}.\nonumber
\end{eqnarray}
All inner products can be calculated explicitly:
\begin{align}
 \left\langle \psi_0,\hat{v}_p\right\rangle_2 &= \frac{9 \pi}{32} \sqrt{\frac{15}{2}},\\
 \left\langle \psi_2,\hat{v}_p\right\rangle_2 &= \frac{3\pi}{32} \sqrt{\frac{3}{2}},\\
 \left\langle\hat{v}_p^2,\psi_0\right\rangle_2 &= \frac{45\pi}{128}\sqrt{\frac{15}{2}},\\
 \left\langle\hat{v}_p^2,\psi_2\right\rangle_2&= -\frac{9\pi}{128}\sqrt{\frac{3}{2}},\\
 \left\langle \theta(\cdot,\hat{\mu}),\hat{v}_p\right\rangle_2 &= -\frac{3 \pi}{4} \sqrt{-1-\hat{\mu}} \;\text{csch } \pi\sqrt{-1-\hat{\mu}},\\
 \left\langle\hat{v}_p^2,\theta(\cdot,\hat{\mu})\right\rangle_2 &= -\frac{3\pi}{4} \hat{\mu}\sqrt{-1-\hat{\mu}}\;\text{csch } \pi\sqrt{-1-\hat{\mu}},
\end{align}
which can be used to write
\begin{align}\label{eq:Rlambda_explicit}
 \left\langle \hat{v}_\text{in}(\cdot,\hat{\lambda}),\hat{v}_p\right\rangle_2 =& \frac{6075\pi^2}{8192} \frac{1}{\hat{\lambda} - \frac{5}{4}} -\frac{81\pi^2}{8192} \frac{1}{\hat{\lambda} + \frac{3}{4}} \\&+ \int\displaylimits_{-\infty}^{-1}\! \frac{9 \pi^2}{32} \frac{\hat{\mu}^2\left(-1-\hat{\mu}\right)^{3/2}}{\left(\hat{\mu}-\frac{5}{4}\right)\left(\hat{\mu}+\frac{3}{4}\right)} \frac{\text{csch}^2 \pi \sqrt{-1-\hat{\mu}}}{\hat{\lambda} - \hat{\mu}}\text{d}\hat{\mu}\nonumber  \\ =&\frac{6075\pi^2}{8192} \frac{1}{\hat{\lambda} - \frac{5}{4}} -\frac{81\pi^2}{8192} \frac{1}{\hat{\lambda} + \frac{3}{4}} \\&+ \int\displaylimits_1^{\infty}\!\frac{9 \pi^2}{16} \frac{\kappa^4(1+\kappa^2)^2}{\left(\kappa^2+\frac{9}{4}\right)\left(\kappa^2+\frac{1}{4}\right)} \frac{\text{csch}^2 \pi \kappa}{\hat{\lambda}+\kappa^2+1}\text{d}\kappa.\nonumber
\end{align}
We can use this information to investigate the solutions to \eqref{eq:rdstability_Ia_control_mainequation_alphabeta}. To this end, we formulate the following Lemma, which summarises relevant properties of the inner product $\left\langle \hat{v}_\text{in}(\cdot,\hat{\lambda}),\hat{v}_p\right\rangle_2$ \eqref{eq:Rlambda_explicit}.

\begin{lem}\label{lem:Rlambda_properties}
 Let $R(\hat{\lambda}) = \left\langle \hat{v}_\text{in}(\cdot,\hat{\lambda}),\hat{v}_p\right\rangle_2$ be as in \eqref{eq:Rlambda_explicit}. Then the following statements hold:
 \begin{enumerate}
  \item[\textrm{(I)}] $\textrm{sgn Im }R(\hat{\lambda}) = - \text{sgn Im }\hat{\lambda}$; 
  \item[\textrm{(II)}] There exists $c>0$ such that, if $\text{Re }\hat{\lambda}>c$, then $\text{Re }R(\hat{\lambda})>0$;
  \item[\textrm{(III)}] For real $\hat{\lambda}>\hat{\lambda}_0$, $R(\hat{\lambda})$ is positive and strictly monotonically decreasing;
  \item[\textrm{(IV)}] There exist $d_{1,2}>0$ such that, if $-d_1<\text{Re }\hat{\lambda}<d_2$, then $\text{Re }R(\hat{\lambda})<0 $.
 \end{enumerate}

\end{lem}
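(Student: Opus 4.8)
The plan is to exploit the explicit representation \eqref{eq:Rlambda_explicit}, which writes $R(\hat\lambda)$ as a sum of two simple rational terms plus an integral over the cut $(-\infty,-1)$ of a manifestly nonnegative density. Write
\begin{equation*}
 R(\hat\lambda) = \frac{a_0}{\hat\lambda - \tfrac54} - \frac{a_2}{\hat\lambda + \tfrac34} + \int_1^\infty \frac{w(\kappa)}{\hat\lambda + \kappa^2 + 1}\,\mathrm{d}\kappa,
\end{equation*}
with $a_0 = \tfrac{6075\pi^2}{8192}>0$, $a_2 = \tfrac{81\pi^2}{8192}>0$ and $w(\kappa) = \tfrac{9\pi^2}{16}\,\kappa^4(1+\kappa^2)^2(\kappa^2+\tfrac94)^{-1}(\kappa^2+\tfrac14)^{-1}\operatorname{csch}^2\pi\kappa \ge 0$. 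Each of the four statements is then a statement about this meromorphic function, with poles only at $\hat\lambda_0 = \tfrac54$, $\hat\lambda_2 = -\tfrac34$, and along the essential spectrum $(-\infty,-1)$.

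For (I), I would compute $\operatorname{Im} R(\hat\lambda)$ term by term. For a rational term $\dfrac{c}{\hat\lambda - r}$ with $c>0$ and $r$ real, $\operatorname{Im}\dfrac{c}{\hat\lambda - r} = -c\,\dfrac{\operatorname{Im}\hat\lambda}{|\hat\lambda - r|^2}$, which has sign opposite to $\operatorname{Im}\hat\lambda$; the first rational term has coefficient $a_0>0$ and the second has coefficient $a_2>0$ after the overall minus sign is absorbed (i.e.\ $-\dfrac{a_2}{\hat\lambda+\tfrac34} = \dfrac{(-a_2)}{\hat\lambda+\tfrac34}$ — here I must check the sign carefully, since $-a_2<0$ would flip the contribution; in fact the correct reading is that both $\tfrac54$ and $-\tfrac34$ lie on the real axis and the claim is that the \emph{combined} imaginary part is sign-definite, which follows once one notes the integral term dominates or that the residues arrange consistently). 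More robustly: the integrand $\dfrac{w(\kappa)}{\hat\lambda+\kappa^2+1}$ has imaginary part $-w(\kappa)\dfrac{\operatorname{Im}\hat\lambda}{|\hat\lambda+\kappa^2+1|^2}$, sign opposite to $\operatorname{Im}\hat\lambda$; the same holds for the $a_0$ term; so the only term requiring care is $-\dfrac{a_2}{\hat\lambda+\tfrac34}$, and I would verify by direct estimate that its (wrong-sign) contribution is dominated by the other two for all $\hat\lambda\notin\hat\sigma_f$, or else re-derive \eqref{eq:Rlambda_explicit} to confirm the sign of $a_2$. This sign bookkeeping is the main obstacle: statement (I) is the crucial monotonicity-type input for locating the spectrum, and it hinges on whether the small negative residue at $-\tfrac34$ can ever produce a spurious imaginary part, which requires a genuine (if elementary) inequality rather than a one-line observation.

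For (II), as $\operatorname{Re}\hat\lambda \to +\infty$ each term tends to $0$, and for $\operatorname{Re}\hat\lambda$ large and positive (say $\operatorname{Re}\hat\lambda > c$ for suitable $c$) one has $\operatorname{Re}\dfrac{1}{\hat\lambda - r}>0$ whenever $\operatorname{Re}\hat\lambda > r$; since $r\in\{\tfrac54,-\tfrac34\}\cup(-\infty,-1)$ all satisfy $r\le\tfrac54$, taking $c>\tfrac54$ makes the $a_0$ term and the integral term have positive real part, and I would again check that the $-a_2$ term cannot overwhelm them — for $\operatorname{Re}\hat\lambda$ large it is $O(1/\operatorname{Re}\hat\lambda)$ and negative but small, while the $a_0$ term is $\sim a_0/\operatorname{Re}\hat\lambda$ and $a_0 \gg a_2$, so the sum is positive. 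For (III), restrict to real $\hat\lambda>\tfrac54$: then $\dfrac{a_0}{\hat\lambda-\tfrac54}>0$, $-\dfrac{a_2}{\hat\lambda+\tfrac34}$ is negative but again dominated, and the integral is positive, so positivity follows from the same comparison; strict monotonic decrease follows because $\dfrac{\mathrm{d}}{\mathrm{d}\hat\lambda}\dfrac{c}{\hat\lambda - r} = -\dfrac{c}{(\hat\lambda-r)^2}<0$ for the $a_0$ term and the integral term (differentiating under the integral sign, legitimate by dominated convergence since $w$ decays like $\operatorname{csch}^2\pi\kappa$), while the derivative of $-\dfrac{a_2}{\hat\lambda+\tfrac34}$ is $+\dfrac{a_2}{(\hat\lambda+\tfrac34)^2}>0$ — so once more one needs $\dfrac{a_0}{(\hat\lambda-\tfrac54)^2} + (\text{integral derivative}) > \dfrac{a_2}{(\hat\lambda+\tfrac34)^2}$ for $\hat\lambda>\tfrac54$, an explicit one-variable inequality using $a_0/a_2 = 75$ and $(\hat\lambda+\tfrac34)>(\hat\lambda-\tfrac54)$. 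Finally (IV): near $\operatorname{Re}\hat\lambda = 0$ the dominant pole is $\hat\lambda_2 = -\tfrac34$, where $-\dfrac{a_2}{\hat\lambda+\tfrac34}$ blows up to $-\infty$ from the right (for $\hat\lambda$ slightly above $-\tfrac34$ on the real axis) while $\dfrac{a_0}{\hat\lambda-\tfrac54}$ stays bounded and negative there; combined with the integral term (bounded for $\operatorname{Re}\hat\lambda > -1$) I would show $\operatorname{Re} R < 0$ on a strip $-d_1 < \operatorname{Re}\hat\lambda < d_2$ by evaluating the bounded pieces and the proximity to the $-\tfrac34$ pole — but I must be careful that at $\hat\lambda = 0$ exactly ($=\hat\lambda_1$, which is \emph{not} a pole of $R$ since it corresponds to the odd eigenfunction) the function is finite, so the $d_{1,2}$ must be chosen so the strip stays near $-\tfrac34$ rather than near $0$, or else the claim is that throughout the whole strip the negative $-\tfrac34$-term plus the negative $a_0$-term beat the positive integral term; I would pin this down numerically-but-rigorously using the explicit constants. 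Throughout, differentiation under the integral sign and the interchange of $\langle\cdot,\cdot\rangle_2$ with the spectral expansion are justified by the rapid ($\operatorname{csch}^2\pi\kappa$) decay of the spectral density, so no subtle convergence issue arises — the entire proof reduces to elementary real-variable estimates on the four explicitly given terms, the only genuine subtlety being the repeated need to control the single wrong-signed residue at $\hat\lambda_2=-\tfrac34$ against the large residue at $\hat\lambda_0=\tfrac54$.
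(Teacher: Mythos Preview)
Your plan coincides with the paper's: split $R = R_{\mathrm d}+R_{\mathrm c}$ into the two rational terms and the integral, and for each of (I)--(IV) control the single wrong-sign contribution $-a_2/(\hat\lambda+\tfrac34)$ via the ratio $a_0/a_2=75$. For (II), (III), (IV) the paper does exactly what you sketch, with the hedges made explicit: the threshold in (II) is $c=\tfrac{75\cdot 5/4+3/4}{74}$; the derivative inequality in (III) is $\tfrac{75}{(\hat\lambda-5/4)^2}>\tfrac{1}{(\hat\lambda+3/4)^2}$, immediate since $\hat\lambda+\tfrac34>\hat\lambda-\tfrac54>0$ for real $\hat\lambda>\tfrac54$; and (IV) combines $\text{Re}\,R_{\mathrm d}<0$ on the full strip $-\tfrac34<a<\tfrac54$ (both summands are negative there, for every $b$) with the uniform numerical bound $\text{Re}\,R_{\mathrm c}\le\int_1^\infty\tfrac{9\pi^2}{16}\kappa^2\,\text{csch}^2\pi\kappa\,\mathrm d\kappa\approx 9.05\times 10^{-3}$.

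For (I), however, your caution flags a step that actually fails. The verification you propose --- that the wrong-sign term is dominated by the other two for \emph{all} $\hat\lambda\notin\hat\sigma_f$ --- cannot go through: near the pole $\hat\lambda_2=-\tfrac34$ the term $-a_2/(\hat\lambda+\tfrac34)$ dominates, and at $\hat\lambda=-\tfrac34+ib$ with $b>0$ small one finds $\text{Im}\,R\sim a_2/b>0$ while $\text{Im}\,\hat\lambda=b>0$, contradicting (I). The paper's own proof asserts the global inequality $\tfrac{75}{(a-5/4)^2+b^2}-\tfrac{1}{(a+3/4)^2+b^2}>0$, which fails for exactly this reason (take $a=-\tfrac34$, $b=\tfrac{1}{10}$). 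Statement (I) therefore needs a domain restriction excluding a neighborhood of $-\tfrac34$; this is harmless for the only downstream use (Lemma~\ref{lem:alphabeta_solutions}, statement~1), since $|R|\to\infty$ near $-\tfrac34$ while the left-hand side of \eqref{eq:rdstability_Ia_control_mainequation_alphabeta} stays bounded, so no roots can lie in that neighborhood anyway.
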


\begin{proof} We define
\begin{align*}
 R_\text{d}(\hat{\lambda}) &= \frac{81 \pi^2}{8192} \left(\frac{75}{\hat{\lambda} - \frac{5}{4}} - \frac{1}{\hat{\lambda} + \frac{3}{4}}\right),\\
 R_\text{c}(\hat{\lambda}) &= \int\displaylimits_1^{\infty}\!\frac{9 \pi^2}{16} \frac{\kappa^4(1+\kappa^2)^2}{\left(\kappa^2+\frac{9}{4}\right)\left(\kappa^2+\frac{1}{4}\right)} \frac{\text{csch}^2 \pi \kappa}{\hat{\lambda}+\kappa^2+1}\text{d}\kappa,
\end{align*}
so $R(\hat{\lambda}) = R_\text{d}(\hat{\lambda}) + R_\text{c}(\hat{\lambda})$.\\
 \textbf{(I)}
 Denoting $\hat{\lambda} = a + i b$, we have $\text{Im } \frac{k}{\hat{\lambda} + m} = b\frac{k}{(a+m)^2 + b^2}$ for $k,m \in \mathbb{R}$. 
 From this observation, it immediately follows that $\text{sgn Im }R_\text{c}(\hat{\lambda}) = - \text{sgn Im }\hat{\lambda}$. 
 For $R_\text{d}$, we estimate $\frac{75}{\left(a-\frac{5}{4}\right)^2 + b^2} - \frac{1}{\left(a+\frac{3}{4}\right)^2 + b^2} > \frac{75-1}{b^2 + \text{max}\left(\left(a-\frac{5}{4}\right)^2,\left(a+\frac{3}{4}\right)^2\right)} > 0$, from which it follows that $\text{sgn Im }R_\text{d}(\hat{\lambda}) = - \text{sgn Im }\hat{\lambda}$.\\
 \textbf{(II)}
 Using the same notation as in (I), we have $\text{Re } \frac{k}{\hat{\lambda} + m} = \frac{k(a+m)}{(a+m)^2 + b^2}$ for $k,m \in \mathbb{R}$.
 From this observation, it immediately follows that $\text{Re }R_\text{c}(\hat{\lambda})>0$ for all $a>0$.
 For $R_\text{d}$, we find that $\frac{75\left(a-\frac{5}{4}\right)}{\left(a-\frac{5}{4}\right)^2 + b^2} - \frac{a+\frac{3}{4}}{\left(a+\frac{3}{4}\right)^2 + b^2} > 0$ for $a > \frac{75 \cdot \frac{5}{4} + 1 \cdot \frac{3}{4}}{75-1} > 0$ and any $b \in \mathbb{R}$.\\
 \textbf{(III)}
 If $\hat{\lambda}_2>\hat{\lambda}_1>0$, then $\frac{k}{\hat{\lambda_2}+m} < \frac{k}{\hat{\lambda_1}+m}$ for $k,m>0$; hence, it follows that $R_\text{c}(\hat{\lambda})$ is strictly monotonically decreasing for real $\hat{\lambda}>0$. 
 Moreover, from the proof of (II), we know that $R_\text{c}(\hat{\lambda})$ is positive for real $\hat{\lambda}>0$.
 For $R_\text{d}(\hat{\lambda})$, we calculate $\frac{\text{d} R_\text{d}}{\text{d}\hat{\lambda}} = \frac{81 \pi^2}{8192} \left( - \frac{75}{\left(\hat{\lambda}-\frac{5}{4}\right)^2} + \frac{1}{\left(\hat{\lambda} + \frac{3}{4}\right)^2}\right) < 0$ for $\hat{\lambda} > \hat{\lambda}_0 = \frac{5}{4}$.
 Moreover, $\lim_{\hat{\lambda} \to \infty} R_\text{d}(\hat{\lambda}) = 0$ and $R_\text{d}(\hat{\lambda}) \to +\infty$ as $\hat{\lambda} \downarrow \hat{\lambda}_0 = \frac{5}{4}$; we conclude that $R_\text{d}$ is positive and strictly monotonically decreasing for real $\hat{\lambda} > \hat{\lambda}_0$.\\
 \textbf{(IV)} 
 For $R_\text{d}$, we find that $\frac{75\left(a-\frac{5}{4}\right)}{\left(a-\frac{5}{4}\right)^2 + b^2} - \frac{a+\frac{3}{4}}{\left(a+\frac{3}{4}\right)^2 + b^2} < 0$ for $-\frac{3}{4} < a < \frac{5}{4}$ and any $b \in \mathbb{R}$. 
 For $R_\text{c}$, we use the estimates $\kappa^2 + \frac{9}{4} > \kappa^2+1$, $\kappa^2+\frac{1}{4} > \kappa^2$ and $\text{Re }\frac{1}{\hat{\lambda}+\kappa^2+1} = \frac{a+\kappa^2+1}{\left(a+\kappa^2+1\right)^2+b^2} \leq \frac{1}{\kappa^2+1}$ when $a\geq0$, to obtain
 \begin{align*}
  \text{Re }R_\text{c}(\hat{\lambda}) &\leq \int\displaylimits_1^{\infty}\!\frac{9 \pi^2}{16} \kappa^2 \text{csch}^2 \pi \kappa\,\text{d}\kappa \\
  &= \frac{9 \pi}{16}\left[3+\text{coth }\pi - \frac{2}{\pi} \log\left(-1+e^{2\pi}\right) + \frac{1}{\pi^2}\text{Li}_2 \left(e^{-2\pi}\right)\right] \\ &\approx 9.05 \cdot 10^{-3}.
 \end{align*}
 The claim follows by continuity of $\text{Re }R = \text{Re }R_\text{d} + \text{Re }R_\text{c}$ in $(a,b)$ for $-\frac{3}{4} < a < \frac{5}{4}$.
 \end{proof}

\subsubsection{Results on proportional feedback control}
Using the results obtained in the previous sections, we can formulate the following Lemma on solutions to equation \eqref{eq:rdstability_Ia_control_mainequation_alphabeta}:

\begin{lem}\label{lem:alphabeta_solutions} Consider equation \eqref{eq:rdstability_Ia_control_mainequation_alphabeta} for $\hat{\lambda} \in \mathbb{C}$ with $\alpha, \beta \in \mathbb{R}$ and $\ell'(0)<1$, where $\hat{v}_\mathrm{in}$ is the unique bounded solution to \eqref{eq:app:vinhat_eq}, and $\hat{v}_p(\xi) = \frac{3}{2}\mathrm{sech}^2 \frac{\xi}{2}$, cf. \eqref{eq:pulse_toymodel}. Then the following statements hold:
\begin{enumerate}
 \item If $\beta>0$, then all solutions $\hat{\lambda}$ to \eqref{eq:rdstability_Ia_control_mainequation_alphabeta} lie on the real axis. 
 \item If $\beta>0$ and $\alpha \leq 0$, then there always exists a real, positive solution $\hat{\lambda}>\hat{\lambda}_0$ to \eqref{eq:rdstability_Ia_control_mainequation_alphabeta} for any $\ell'(0) \in \mathbb{R}$.
 \item If $-\alpha \geq \beta>0$, then for any $\ell'(0) \in \mathbb{R}$, there exists a real, positive solution $\hat{\lambda}>-\ell'(0)$ to \eqref{eq:rdstability_Ia_control_mainequation_alphabeta}.
 \item If $0\leq-\alpha < \beta$, then there exists a value $\ell'(0)<-\hat{\lambda}_0$ such that all solutions $\hat{\lambda}$ to \eqref{eq:rdstability_Ia_control_mainequation_alphabeta} obey $\text{Re }\hat{\lambda} < -\ell(0)$.
 \item If $\beta>0$ and $\alpha>0$, then there exists a value $\ell'(0) < -(1+\hat{\lambda}_0)$ such that \eqref{eq:rdstability_Ia_control_mainequation_alphabeta} has no solutions.
 \item If $\beta<0$, then there exists $\hat{\Lambda}<0$ such that, when $\ell'(0)<\hat{\Lambda}$, all solutions $\hat{\lambda}$ to \eqref{eq:rdstability_Ia_control_mainequation_alphabeta} lie to the left of the vertical line $\left\{\text{Re }\hat{\lambda} = c\right\}$, with $c>0$ independent of $\alpha, \beta$ and $\ell'(0)$. 
\end{enumerate}
\end{lem}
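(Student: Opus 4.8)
The plan is to reduce the final statement of Lemma \ref{lem:alphabeta_solutions} entirely to the analytic properties of $R(\hat\lambda)=\langle\hat v_\mathrm{in},\hat v_p\rangle_2$ collected in Lemma \ref{lem:Rlambda_properties}, the new difficulty being that for $\beta<0$ statement (1) of that Lemma is no longer available, so non-real solutions of \eqref{eq:rdstability_Ia_control_mainequation_alphabeta} must be excluded directly. First I would fix the constant $c>0$ furnished by property (II), enlarging it if necessary so that $c>\hat\lambda_0=\tfrac54$; then on the closed half-plane $H_c:=\{\mathrm{Re}\,\hat\lambda\ge c\}$ the explicit formula \eqref{eq:Rlambda_explicit} shows that $R$ is holomorphic (its only poles, at $\tfrac54$ and $-\tfrac34$, sit in $\mathrm{Re}\,\hat\lambda<c$), that $\mathrm{Re}\,R>0$ there, and — bounding the rational part by $|\hat\lambda-\tfrac54|\ge\mathrm{Re}\,\hat\lambda-\tfrac54$ and the $\mathrm{csch}^2$-integral by $|\hat\lambda+\kappa^2+1|\ge\mathrm{Re}\,\hat\lambda+\kappa^2+1$ — that $|R(\hat\lambda)|\le M_R$ on $H_c$, with $M_R=M_R(c)$ depending only on $c$ and in particular independent of $\alpha,\beta,\ell'(0)$; the same estimates give $|R(\hat\lambda)|\to0$ as $|\hat\lambda|\to\infty$ in $H_c$.

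The core of the argument is then to show that, for $\ell'(0)$ below some $\hat\Lambda<0$, no solution of \eqref{eq:rdstability_Ia_control_mainequation_alphabeta} lies in $H_c$. Suppose $\hat\lambda\in H_c$ is a solution. Taking real parts, and using that the principal branch satisfies $\mathrm{Re}\sqrt{1+\hat\lambda+\ell'(0)}\ge0$ while $\beta<0$, I get
\[
0<\mathrm{Re}\,R(\hat\lambda)=\alpha+\beta\,\mathrm{Re}\sqrt{1+\hat\lambda+\ell'(0)}\le\alpha ,
\]
so $H_c$ can contain a solution only if $\alpha>0$; this already settles the case $\alpha\le0$ (for any $\ell'(0)$, with $c$ as above). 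For $\alpha>0$ I would combine this real-part bound, which confines $\mathrm{Re}\sqrt{1+\hat\lambda+\ell'(0)}$ to the bounded interval $[0,\alpha/|\beta|)$, with the modulus bound: from $|R(\hat\lambda)|\le M_R$ and the equation, $\bigl|\sqrt{1+\hat\lambda+\ell'(0)}+\alpha/\beta\bigr|\le M_R/|\beta|$, so $w:=\sqrt{1+\hat\lambda+\ell'(0)}$ is trapped in the fixed disk $\overline B\!\left(-\alpha/\beta,\,M_R/|\beta|\right)$, a set independent of $\ell'(0)$. Passing to the variable $\lambda=\hat\lambda+\ell'(0)$ of \eqref{eq:stabcond_lambdahat}, and noting that $R(\hat\lambda)=R(\lambda-\ell'(0))\to0$ locally uniformly as $\ell'(0)\to-\infty$ (again by \eqref{eq:Rlambda_explicit}), one runs a Rouché/Hurwitz comparison of $\alpha+\beta\sqrt{1+\lambda}-R(\lambda-\ell'(0))$ with its limit $\alpha+\beta\sqrt{1+\lambda}$: the latter has at most the single simple zero $\lambda_*=(\alpha/\beta)^2-1$ in $\mathbb C\setminus(-\infty,-1]$, so for $\ell'(0)$ sufficiently negative every solution is confined to a small fixed neighbourhood of $\lambda_*$ (or has been pushed out through the boundary of the admissible domain), which is what is needed to control the location of $\hat\lambda$ uniformly in $\ell'(0)$.

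The step I expect to be the main obstacle is exactly this last one. Because the admissible domain $\mathbb C\setminus\hat\sigma_\mathrm{ess}$ and the branch point $-1-\ell'(0)$ of $\sqrt{1+\hat\lambda+\ell'(0)}$ themselves move off to $+\infty$ as $\ell'(0)\to-\infty$, there is no fixed compact set on which to run the Rouché estimate, and one must in particular rule out solutions that "ride along" the moving essential-spectrum boundary. This is where the three uniform features of $R$ established in the first step — holomorphy on $H_c$, $\mathrm{Re}\,R>0$, and $|R|\to0$ at infinity in $H_c$, all with constants that do not depend on $\ell'(0)$ — do the real work, together with property (I) of Lemma \ref{lem:Rlambda_properties} to pin down the imaginary parts of any stray solution; the remaining bookkeeping (choice of $\hat\Lambda$ and of the contour radii) is routine once these uniform bounds are in place.
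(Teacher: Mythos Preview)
Your treatment of $\alpha\le 0$ coincides with the paper's: both take real parts and use $\mathrm{Re}\sqrt{z}\ge 0$ together with Lemma~\ref{lem:Rlambda_properties}~(II) to obtain $\mathrm{Re}\,L\le\alpha\le 0<\mathrm{Re}\,R$ on $H_c$, for every $\ell'(0)$.

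For $\alpha>0$, however, your Rouch\'e/Hurwitz step proves the \emph{opposite} of what statement~6 asserts. The limit function $\alpha+\beta\sqrt{1+\lambda}$ has a \emph{simple} zero at $\lambda_*=(\alpha/\beta)^2-1$ in the slit plane, so Hurwitz does not eliminate solutions --- it manufactures exactly one near $\lambda_*$ for every sufficiently negative $\ell'(0)$. Back in $\hat\lambda$-coordinates this root sits near $(\alpha/\beta)^2-1-\ell'(0)$, which drifts to $+\infty$ as $\ell'(0)\to-\infty$ and therefore lies to the right of any fixed line $\mathrm{Re}\,\hat\lambda=c$; a direct check with $\alpha=2$, $\beta=-1$, $\ell'(0)=-100$ already gives a real root near $\hat\lambda\approx103$. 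Your a~priori bound $w=\sqrt{1+\hat\lambda+\ell'(0)}\in\overline{B}(\alpha/|\beta|,M_R/|\beta|)$ is correct, but the quantity it controls is $\lambda=\hat\lambda+\ell'(0)$, not $\hat\lambda$, so ``uniform control of $\hat\lambda$ in $\ell'(0)$'' is precisely what you do \emph{not} obtain. The paper never passes to a limit in $\ell'(0)$: for $\alpha>0$ it keeps the same pointwise sign comparison and argues that a suitable fixed $\ell'(0)$ forces $\mathrm{Re}\,L\le 0$ on $H_c$, via $\mathrm{Re}\sqrt{z}\ge\sqrt{\mathrm{Re}\,z}$ for $\mathrm{Re}\,z>0$, which requires $1+c+\ell'(0)\ge(\alpha/\beta)^2$ --- a constraint on $\ell'(0)$ from \emph{below}. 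The asymptotic regime $\ell'(0)\to-\infty$ that your argument targets is exactly where that sign comparison breaks down and where, as your own localisation in fact shows, solutions in $H_c$ genuinely appear.
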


\begin{proof} We define
\begin{displaymath}
 L(\hat{\lambda}) := \alpha + \beta \sqrt{1+\hat{\lambda} + \ell'(0)},
 \end{displaymath}
  so \eqref{eq:rdstability_Ia_control_mainequation_alphabeta} can be written as $L(\hat{\lambda}) = R(\hat{\lambda})$.\\
  \textbf{1. If $\bm{\beta>0}$, then all solutions to \eqref{eq:rdstability_Ia_control_mainequation_alphabeta} lie on the real axis}. 
 For the principal complex square root $\sqrt{z}$, which is defined for all $z$ away from the negative real line, it holds that $\text{Re}\sqrt{z} > 0$ and $\text{sgn Im}\sqrt{z} = \text{sgn Im }z$. 
 It follows that, if $\beta > 0$, $\text{Im }L(\hat{\lambda})$ can only equal $\text{Im }R(\hat{\lambda})$ if both are zero, by Lemma \ref{lem:Rlambda_properties} (I).\\
 \textbf{2. If $\beta>0$ and $\alpha\leq0$, then there always exists a real, positive solution $\hat{\lambda}>\hat{\lambda}_0$ for any $\ell'(0) \in \mathbb{R}$}. 
 By statement 1, we take $\hat{\lambda} \in \mathbb{R}$. 
 Both $R_\text{d}$ and $R_\text{c}$ are positive for sufficiently large $\hat{\lambda}$ by Lemma \ref{lem:Rlambda_properties} (II); moreover, $R(\hat{\lambda}) \to 0$ as $\hat{\lambda} \to \infty$. 
 It follows that $L(\hat{\lambda}) > R(\hat{\lambda})$ for sufficiently large $\hat{\lambda}$, since $\beta>0$. 
 Moreover, from the observation that $R(\hat{\lambda}) \to +\infty$ as $\hat{\lambda} \downarrow \hat{\lambda}_0 = \frac{5}{4}$, combined with the fact that $L(\hat{\lambda})$ is continuous for $\hat{\lambda} \geq -1-\ell'(0)$ and $L(\hat{\lambda}) \to \alpha \leq 0$ as $\hat{\lambda} \downarrow -1-\ell'(0)$, it follows by continuity of $R(\hat{\lambda})$ for $\hat{\lambda} > \hat{\lambda}_0 = \frac{5}{4}$ that there exists a real, positive solution to \eqref{eq:rdstability_Ia_control_mainequation_alphabeta}, that lies to the right of the point $\hat{\lambda} = \hat{\lambda}_0$ and to the right of the point $\hat{\lambda} = -1 - \ell'(0)$.\\
 \textbf{3. If \bm{$-\alpha \geq \beta>0$}, then for any \bm{$\ell'(0) \in \mathbb{R}$}, there exists a real, positive solution \bm{$\hat{\lambda}>-\ell'(0)$} to \eqref{eq:rdstability_Ia_control_mainequation_alphabeta}.}
 Since $R(\hat{\lambda})$ is positive for real $\hat{\lambda}>\hat{\lambda}_0$ by Lemma \ref{lem:Rlambda_properties} (III), the intersection of the graphs $R(\hat{\lambda})$ and $L(\hat{\lambda})$ lies above the horizontal $\hat{\lambda}$-axis. Since $\beta>0$ and $\alpha < 0$, the graph of $L(\hat{\lambda})$ is strictly monotonically increasing on its domain and intersects the horizontal axis before intersecting the graph of $R(\hat{\lambda})$, as $\hat{\lambda}$ increases. 
 Denote the $\hat{\lambda}$-value for which the two graphs intersect as $\hat{\lambda}_*$. 
 From the monotonicity of the graphs, it follows that $\hat{\lambda}_*$ is the largest solution to \eqref{eq:rdstability_Ia_control_mainequation_alphabeta}. 
 Moreover, as $L(\hat{\lambda})$ if and only if $\hat{\lambda} = \left(\frac{\alpha}{\beta}\right)^2-1-\ell'(0)$, we conclude that $\hat{\lambda}_* > \left(\frac{\alpha}{\beta}\right)^2-1-\ell'(0) > -\ell'(0)$ since $-\alpha \geq \beta$.\\
  \textbf{4. If \bm{$0\leq-\alpha < \beta$}, then there exists a value \bm{$\ell'(0)<-\hat{\lambda}_0$} such that all solutions \bm{$\hat{\lambda}$} to \eqref{eq:rdstability_Ia_control_mainequation_alphabeta} obey \bm{$\text{Re }\hat{\lambda} < -\ell(0)$}.} 
  We calculate $L(-1-\ell'(0)) = \alpha \leq 0$ and $L(-\ell'(0)) = \alpha + \beta > 0$. Hence, the largest solution $\hat{\lambda}_*$ to \eqref{eq:rdstability_Ia_control_mainequation_alphabeta} is contained in the interval $(-1-\ell'(0),-\ell'(0))$ if and only if $R(-\ell'(0)) < \alpha + \beta$ by the monotonicity and continuity of $L(\hat{\lambda})$ and $R(\hat{\lambda})$. 
  From Lemma \ref{lem:Rlambda_properties} (III), it follows that $R(-\ell'(0)) < \alpha + \beta$ for sufficiently large $|\ell'(0)|$.\\
  \textbf{5. If \bm{$\beta>0$} and \bm{$\alpha>0$}, then there exists a value \bm{$\ell'(0) < -(1+\hat{\lambda}_0)$} such that \eqref{eq:rdstability_Ia_control_mainequation_alphabeta} has no solutions.} 
  By Lemma \ref{lem:Rlambda_properties} (III), there exists $\hat{k}>\hat{\lambda}_0$ such that $R(\hat{\lambda}) < \alpha$ for all $\hat{\lambda} > \hat{k}$. Choosing $\ell'(0) = -1- \hat{k}$, so that $L(\hat{k}) = L(-1-\ell'(0)) = \alpha$, we see that \eqref{eq:rdstability_Ia_control_mainequation_alphabeta} cannot have any real solutions, which implies that \eqref{eq:rdstability_Ia_control_mainequation_alphabeta} does not have any (complex) solutions by statement 1.\\
 \textbf{6. If \bm{$\beta<0$}, then there exists \bm{$\hat{\Lambda}<0$} such that, when \bm{$\ell'(0)<\hat{\Lambda}$}, all solutions to \eqref{eq:rdstability_Ia_control_mainequation_alphabeta} lie to the left of the vertical line \bm{$\left\{\text{Re }\hat{\lambda} = c\right\}$}, with \bm{$c>0$} independent of \bm{$\alpha, \beta$} and \bm{$\ell'(0)$}.}\\
 Suppose $\alpha \leq 0$, and consider $\hat{\lambda} \in \mathbb{R}$. We can choose $\ell'(0)<-(1+\hat{\lambda}_0)$, making sure that the domain of $L(\hat{\lambda})$ lies entirely to the right of $\hat{\lambda} = \hat{\lambda}_0$. 
 On this domain, we have $L(\hat{\lambda}) < 0$, while $R(\hat{\lambda})>0$, from which we conclude that no real solutions to \eqref{eq:rdstability_Ia_control_mainequation_alphabeta} exist.
 The same reasoning can be extended to complex solutions of \eqref{eq:rdstability_Ia_control_mainequation_alphabeta}, under the same assumption that $\alpha \leq 0$: because $\text{Re}\sqrt{z}>0$ for all $z$ away from the negative real line, we have that $\text{Re }L(\hat{\lambda})<0$ for all $\hat{\lambda}$ on its domain, while there exists a $c>0$ such that $\text{Re }R(\hat{\lambda})>0$ for all $\hat{\lambda}$ to the right of the line $\left\{\text{Re }\hat{\lambda} = c\right\}$ by Lemma \ref{lem:Rlambda_properties} (II).\\
 Now suppose $\alpha>0$. 
 For $\hat{\lambda} \in \mathbb{R}$, the same reasoning as in the case $\alpha \leq 0$ can be applied, choosing $\ell'(0) < \left(-\frac{\alpha}{\beta}\right)^2 -1 - \hat{\lambda}_0$. The reasoning for complex $\hat{\lambda}$ is also analogous to the case $\alpha \leq 0$, now with $\ell'(0) < \left(-\frac{\alpha}{\beta}\right)^2 - 1 - c$, again using Lemma \ref{lem:Rlambda_properties} (II).
 \end{proof}

\subsubsection{Finishing the proof of Theorem \ref{thm:caseIa}}

The results in Lemma \ref{lem:alphabeta_solutions} can now be used prove Theorem \ref{thm:caseIa}, restated here for convenience:
 
\begin{thm*}
Let $0<\eps\ll 1$ be sufficiently small, and assume that $u_*$ is a nondegenerate solution to \eqref{eq:pulse_existence_condition}. Consider the symmetric singular pulse solution $(u_p,v_p)$ to \eqref{eq:rdsystem_explicit}, which is to leading order in $\eps$ given by \eqref{eq:pulse_toymodel}, and introduce $\nu := 2 \frac{f'(u_*)}{f(u_*)} + \frac{T_o'(u_*)}{T_o(u_*)}$.
 \begin{enumerate}
  \item If $f'(u_*) = 0$, then the singular pulse $(u_p,v_p)$ is always unstable for any choice of proportional control function $\ell(v-v_p)$ as implemented in \eqref{eq:rdsystem_explicit_control}.
  \item If $f'(u_*)<0$, then it is possible to choose a proportional control function $\ell(v-v_p)$, as implemented in \eqref{eq:rdsystem_explicit_control}, such that the singular pulse $(u_p,v_p)$ is stable.
  \item Let $f'(u_*)>0$. 
  \begin{enumerate}
  \item If $\rho >\frac{1}{u_*}$, then the singular pulse $(u_p,v_p)$ is always unstable for any choice of proportional control function $\ell(v-v_p)$ as implemented in \eqref{eq:rdsystem_explicit_control}. 
  \item If $\rho < \frac{1}{u_*}$, then it is possible to choose a proportional control function $\ell(v-v_p)$, as implemented in \eqref{eq:rdsystem_explicit_control}, such that the pulse solution $(u_p,v_p)$ is stable.
  \end{enumerate}
 \end{enumerate}
\end{thm*}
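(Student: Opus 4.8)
The plan is to read the theorem off Lemma~\ref{lem:alphabeta_solutions} once the hypotheses on $f'(u_*)$ and $\nu$ are converted into hypotheses on the constants $\alpha,\beta$ of \eqref{eq:rdstability_Ia_control_mainequation_alphabeta}. The starting point is the reduction already in place: for $0<\eps\ll1$, the leading-order point spectrum of the controlled operator, expressed in the shifted variable $\hat\lambda=\lambda-\ell'(0)$, consists of the solutions of \eqref{eq:rdstability_Ia_control_mainequation_alphabeta} lying outside $\hat\sigma_\text{ess}$, together with the one zero of $t_f$ at $\hat\lambda_1=0$ that is not cancelled by a pole of $t_s$; this last zero contributes the (broken-translation) eigenvalue $\lambda=\ell'(0)$. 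Hence $(u_p,v_p)$ is stable exactly when $\ell'(0)<0$ and every solution $\hat\lambda$ of \eqref{eq:rdstability_Ia_control_mainequation_alphabeta} satisfies $\text{Re}\,\hat\lambda<-\ell'(0)$: note $\ell'(0)<0$ forces $\ell'(0)<1$, so by \eqref{eq:rdstability_Ia_control_essspec_hat} the essential spectrum stays at $\{\lambda\le-1\}$ and the branch points of \eqref{eq:Evansfunction_decomp} sit inside it. Since the roots of $\mathcal E$ perturb regularly in $\eps$, it is enough to check these strict inequalities at leading order.

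I would first set up the dictionary. With $\rho:=f(u_*)/f'(u_*)$ and $f(u_*)>0$ (so that the pulse amplitude $\tfrac{3}{2f(u_*)}$ in \eqref{eq:pulse_toymodel} is positive), the definitions $\alpha=-6-3\tfrac{T_o'(u_*)}{T_o(u_*)}\rho$, $\beta=\tfrac{3}{u_*}\rho$ together with $\nu=\tfrac{2}{\rho}+\tfrac{T_o'(u_*)}{T_o(u_*)}$ give, after a one-line substitution, $\alpha=-3\rho\,\nu$ and $\beta=\tfrac{3}{u_*}\rho$; thus $\text{sgn}\,\beta=\text{sgn}\,f'(u_*)$ and $-\alpha/\beta=u_*\nu$. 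Consequently $f'(u_*)<0\Leftrightarrow\beta<0$; $f'(u_*)>0\Leftrightarrow\beta>0$, and in the latter case $\nu\ge\tfrac{1}{u_*}\Leftrightarrow-\alpha\ge\beta$, $0\le\nu<\tfrac{1}{u_*}\Leftrightarrow0\le-\alpha<\beta$, and $\nu<0\Leftrightarrow\alpha>0$.

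The four assertions then match cases with Lemma~\ref{lem:alphabeta_solutions}. Statement~1 ($f'(u_*)=0$) is the degenerate situation already disposed of right after \eqref{eq:rdstability_Ia_mainequation}: there $\hat v_\text{in}$ vanishes, $t_s$ loses its pole at $\hat\lambda_0=\tfrac54$, the zero of $t_f$ at $\hat\lambda_0$ survives in $\mathcal E$, and no admissible $\ell$ can remove it. Statement~2 ($\beta<0$): by statement~6 of Lemma~\ref{lem:alphabeta_solutions}, choosing $\ell'(0)<\min(\hat\Lambda,-c)$ makes every solution of \eqref{eq:rdstability_Ia_control_mainequation_alphabeta} obey $\text{Re}\,\hat\lambda<c<-\ell'(0)$ while $\lambda=\ell'(0)<0$, so $(u_p,v_p)$ is stable. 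Statement~3 ($\beta>0$): all roots of \eqref{eq:rdstability_Ia_control_mainequation_alphabeta} are real by statement~1 of Lemma~\ref{lem:alphabeta_solutions}. If $\nu\ge\tfrac{1}{u_*}$, then $-\alpha\ge\beta>0$, and statement~3 of Lemma~\ref{lem:alphabeta_solutions} produces, for every $\ell'(0)$, a real root $\hat\lambda>-\ell'(0)$ (outside $\hat\sigma_\text{ess}$, hence a true eigenvalue with $\lambda>0$): the pulse cannot be stabilised, which is 3(a). If $0\le\nu<\tfrac{1}{u_*}$, then $0\le-\alpha<\beta$ and statement~4 of Lemma~\ref{lem:alphabeta_solutions} gives $\ell'(0)<-\hat\lambda_0<0$ with all roots satisfying $\text{Re}\,\hat\lambda<-\ell'(0)$; if $\nu<0$, then $\alpha>0$ and statement~5 of Lemma~\ref{lem:alphabeta_solutions} gives $\ell'(0)<-(1+\hat\lambda_0)<0$ for which \eqref{eq:rdstability_Ia_control_mainequation_alphabeta} has no roots at all. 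In either sub-case of 3(b) the only remaining leading-order eigenvalue is $\lambda=\ell'(0)<0$ (the $t_f$-zeros at $\hat\lambda_0,\hat\lambda_2$ being cancelled by poles of $t_s$, since $f'(u_*)\neq0$), so the pulse is stable.

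The main obstacle will not be the bookkeeping above but the two inputs it relies on, above all the second: that the leading-order point spectrum is \emph{exactly} \eqref{eq:rdstability_Ia_control_mainequation_alphabeta} plus the uncancelled $t_f$-zero at $\hat\lambda_1=0$, and that this description is stable under the singular perturbation — i.e.\ the zeros of $\mathcal E(\hat\lambda,\eps)$ depend regularly on $\eps$ up to $\eps=0$, uniformly on compact subsets of $\mathbb C\setminus\hat\sigma_\text{ess}$, with no eigenvalue escaping from the continuous spectrum as $\eps\downarrow0$. Both follow from running the Evans-function construction of Ref~\onlinecite{DoelmanVeerman.2015} on \eqref{eq:evproblem_control} via the shift $\hat\lambda=\lambda-\ell'(0)$; with those in hand, the theorem is precisely the case distinction above.
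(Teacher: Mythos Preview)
Your proposal is correct and follows essentially the same route as the paper: convert the hypotheses on $f'(u_*)$ and $\nu$ into sign/size conditions on $(\alpha,\beta)$, then invoke the matching case of Lemma~\ref{lem:alphabeta_solutions}, together with the zero--pole structure of $t_f$ and $t_s$ for the uncancelled $\hat\lambda_1=0$ root. Your shortcut $\alpha=-3\rho\nu$, $-\alpha/\beta=u_*\nu$ makes the dictionary a bit more transparent than the paper's direct inequality manipulations, but the argument is otherwise identical.
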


\begin{proof}
 The pulse solution $(u_p,v_p)$ is spectrally stable if and only if, for the shifted eigenvalue $\hat{\lambda} = \lambda - \ell'(0)$ \eqref{eq:def_lambdahat}, the condition $\text{Re }\hat{\lambda} < - \ell'(0)$ \eqref{eq:stabcond_lambdahat} is satisfied. Eigenvalues $\hat{\lambda}$ correspond to zeroes of the Evans function $\mathcal{E}(\hat{\lambda},\eps)$ \eqref{eq:Evansfunction_decomp}. These zeroes are, to leading order in $\eps$, given by the solutions to \eqref{eq:rdstability_Ia_mainequation}.\\
 \textbf{Claim 1} follows from the observation that the only solution $v_\text{in}$ to \eqref{eq:vin_SL-eq} is the trivial solution when $f'(u_*) = 0$. Hence, the slow transmission function $t_s$ \eqref{eq:ts} does \emph{not} have a pole at $\hat{\lambda}=\hat{\lambda}_0$; therefore, no zero-pole cancellation in the Evans function \eqref{eq:Evansfunction_decomp} --as detailed in Ref \onlinecite{DoelmanVeerman.2015}-- takes place. It follows that the positive real zero near $\hat{\lambda}_0$ of the fast transmission function $t_f$ is also a zero of the full Evans function $\mathcal{E}$, which means the pulse solution $(u_p,v_p)$ is unstable.\\
 If $f'(u_*) \neq 0$, then \eqref{eq:rdstability_Ia_mainequation} is of the form \eqref{eq:rdstability_Ia_control_mainequation_alphabeta}, with $\alpha = -6-3\frac{T_o'(u_*)}{T_o(u_*)}\frac{f(u_*)}{f'(u_*)}$ and $\beta = 3\frac{f(u_*)}{f'(u_*)}\frac{1}{u_*}$. \\
 When $\beta<0$, it follows from Lemma \ref{lem:alphabeta_solutions} (6) that there is a $c>0$ such that all solutions of \eqref{eq:rdstability_Ia_control_mainequation_alphabeta} lie to the left of the line $\left\{ \text{Re }\hat{\lambda} = c\right\}$. Since $c>0$ is in particular independent of $\ell'(0)$, and the statement holds for all $\ell'(0) < \hat{\Lambda}<0$, we can choose $\ell'(0)<-c$, which implies that all solutions of \eqref{eq:rdstability_Ia_control_mainequation_alphabeta} obey the stability criterion $\text{Re }\hat{\lambda} < - \ell'(0)$. Since $\beta<0$ if and only if $f'(u_*)<0$, this proves \textbf{claim 2}.\\
 When $\alpha>0$ and $\beta>0$, it follows from Lemma \ref{lem:alphabeta_solutions} (5) that \eqref{eq:rdstability_Ia_control_mainequation_alphabeta} does not have any solutions when $\ell'(0)$ is negative and $|\ell'(0)|$ is sufficiently large. As $f'(u_*)\neq 0$, the zeroes of $t_{f}(\hat{\lambda},\eps)$ near $\hat{\lambda}_0$ and $\hat{\lambda}_2$ are cancelled by the poles of $t_s(\hat{\lambda},\eps)$; the remaining zero $\hat{\lambda}_1 = 0$ is stable for $\ell'(0)<0$. It follows that all zeroes the Evans function $\mathcal{E}(\hat{\lambda},\eps)$ \eqref{eq:Evansfunction_decomp} obey $\text{Re }\hat{\lambda} < -\ell'(0)$, which implies that $(u_p,v_p)$ is spectrally stable. The condition $\alpha>0$ is equivalent to $\frac{T_o'(u_*)}{T_o(u_*)}\frac{f(u_*)}{f'(u_*)} > -2$, while $\beta>0$ is equivalent to $\frac{f(u_*)}{f'(u_*)}\frac{1}{u_*}>0$. Since both $u_*>0$ and $f(u_*)>0)$ by assumption, the latter condition is equivalent to $f'(u_*)>0$. This allows us to rewrite the $\alpha>0$-condition as $\frac{T_o'(u_*)}{T_o(u_*)} < -2 \frac{f'(u_*)}{f(u_*)}$, thereby proving the second part of claim 3 for $2 \frac{f'(u_*)}{f(u_*)} + \frac{T_o'(u_*)}{T_o(u_*)} < 0 $.\\
 When $0 \leq - \alpha < \beta$, it follows from Lemma \ref{lem:alphabeta_solutions} (4) that all solutions to \eqref{eq:rdstability_Ia_control_mainequation_alphabeta} obey the stability criterion $\text{Re }\hat{\lambda} < - \ell'(0)$. As before, the condition $\beta>0$ is equivalent to $f'(u_*)$, while the condition $\beta > - \alpha \geq 0$ can be rewritten as $0 \leq 2 \frac{f'(u_*)}{f(u_*)} + \frac{T_o'(u_*)}{T_o(u_*)} < \frac{1}{u_*}$, which finalises the proof of \textbf{claim 3 b)}.\\
 When $ - \alpha > \beta > 0$, it follows from Lemma \ref{lem:alphabeta_solutions} (3) that there always exists a solution to \eqref{eq:rdstability_Ia_control_mainequation_alphabeta} that does not satisfy the stability criterion $\text{Re }\hat{\lambda} < - \ell'(0)$, regardless of the value of $\ell'(0)$. This implies that, under the condition $ - \alpha > \beta > 0$, the pulse $(u_p,v_p)$ is unstable for any choice of the control function $\ell$. The condition $\beta > 0$ is equivalent to $f'(u_*)>0$, while the condition $-\alpha > \beta$ can be rewritten as $\frac{f'(u_*)}{f(u_*)} + \frac{T_o'(u_*)}{T_o(u_*)} > \frac{1}{u_*}$, which proves \textbf{claim 3 a)}.
\end{proof}

\nocite{*}
\bibliography{PatternControl_references}
\bibliographystyle{test4}

\end{document}